\begin{document}

\title{Morphisms of tautological control systems
}


\author{Qianqian Xia          
}


\institute{Qianqian Xia \at
              School of Information and Control, Nanjing University of Information Science \& Technology, Nanjing, 210044, China\\
              \email{inertialtec@sina.com}
}

\date{Received: date / Accepted: date}

\maketitle

\begin{abstract}
In this paper, we investigate morphisms of tautological control systems. Given a tautological control system
$\mathfrak{H}$ on the manifold N and a mapping $\Phi: M \to N$, we study existence of tautological control system $\mathfrak{G}$ on the manifold $M$ such that there exists a trajectory-preserving morphism $(\Phi, \Phi^\#)$ from $\mathfrak{G}$ to $\mathfrak{H}$. Sufficient conditions are given such that reachability of $\mathfrak{H}$ implies the reachability of $\mathfrak{G}$. Correspondence between the notion of lifting ordinary control systems and morphisms of tautological control systems are examined. We give an application of the above results to the class of second-order type control systems, where the special structure of second-order type leads to additional results.
\keywords{Tautological control system \and Morphism\and Reachability\and Trajectory lifting \and  Second-order type}
\end{abstract}

\section{Introduction}
\label{intro}
Morphisms of ordinary control systems have been studied by several researchers. From a theoretical point of view,
the study of morphisms is interesting since it reveals system structures that must be understood. And it can help us
solve the analysis and design problems for ordinary control systems which are often difficult due to the complicated
nature of the equations describing the systems. Among the existing works the notion of quotient control systems has
been introduced by different authors. In \cite{Ref1}, the analysis of the Lie algebra of an ordinary control systems
leads to a decomposition into smaller systems. In \cite{Ref2}, the problem of reduction for ordinary control systems with symmetries
is studied. In \cite{Ref3}, \cite{Ref4}, Pappas et. al study abstractions of control systems where a constructive procedure
was proposed to compute smaller systems. In \cite{Ref5}, Tabuada and Pappas study quotients of ordinary control systems
by the use of category theory. In \cite{Ref6}, quotients of Lagrangian mechanical control system are investigated.
In a certain sense all the above work focus on the notion of projection. A different research direction is taken
in \cite{Ref7}, where the notion of lifting ordinary control system is introduced. In this paper, we will study
morphisms of tautological control systems introduced by Lewis in \cite{Ref8}. It is to be seen that our research
is in the same direction as in \cite{Ref7}.

Tautological control systems proposed by Lewis provide us with a framework that models systems in a manner that
does not make any choice of parameterisation by control. This framework is able to handle a variety of degrees of
regularity: finitely differentiable, Lipschitz, smooth and especially real analytic.  Due to the use of sheaf theory,
the framework can deal with distinctions between local and global, which are unnoticed in current control theory.
The problem to be explored in this paper goes as follows.

Given a $C^\nu$ tautological control system $\mathfrak{H}=(N, \mathscr{G})$
on the $C^r$ manifold $N$ and a mapping $\Phi \in C^{r}(M, N)$ where $M$ is a $C^r$ manifold. We will be interested in
answering the main questions:

(i) Does there exist a tautological control system $\mathfrak{G}=(M, \mathscr{F})$ such that there exists a trajectory-preserving
morphism $(\Phi, \Phi^\#)$ from $\mathfrak{G}$ to $\mathfrak{H}$? (ii) When does the reachability of
$\mathfrak{H}$ imply the reachability of $\mathfrak{G}$?

See Section 2 for the definitions of tautological control system and trajectory-preserving
morphism, and the precise information about $\nu$ and $r$ above.

Similar question has been studied in \cite{Ref7} in the context of ordinary control systems, where the systems are
assumed to be only $C^1$ control systems. Here we
deal with tautological control systems. It is know in \cite{Ref8} that given an ordinary control system, we can associate
 a tautological control system to it. With the correspondence between trajectories of an ordinary control system and
 its associated tautological control system as claimed in \cite{Ref8}, then the useful system theoretic properties can
 be found in the framework of tautological control system without having to go back to the ordinary control system
 framework.  See Section 2 for detailed interpretations of the correspondence between ordinary control system and tautological
 control system.

Our work here is different from the work in \cite{Ref7}. First, the answer to the question in \cite{Ref7} is localised, and distinctions between local and
 global are not well dealt with. Our work tries to understand the passage from local to global and patch together local
 constructions to give global constructions. It is to be seen that with the aid of sheaf language, globally defined
 control constructions can be got. Second, our work also investigate the case of real analytic systems.
 Real analyticity plays an important role in geometric control theory (e.g. Orbit Theorem \cite{Ref9}).
 However, real analyticity hasn't been well understood in control theory. One reason is that the underlying geometry is a lot different from smooth differential geometry,
 and the techniques of real analytic differential geometry are difficult to learn and to learn to apply.
 In this paper we try to make some discussions on real analytic system with the aid of sheaf theory.

Our work is in the framework  of tautological control system introduced by Lewis and Jafarpour
in \cite{Ref8} and \cite{Ref11}. Here we focus on the structure of trajectory-preserving morphism of
tautological control systems. The main result (see Theorem 3) provides a sufficient condition for the reachability
of  $\mathfrak{H}$ to imply the reachability of $\mathfrak{G}$. Compared with the framework of ordinary control
system, it is to be seen that there is something different in the framework of tautological control system, where
there is no choice of parameterisation by control.

The paper is organized as follows. In Section 2 we provide some basic notions and present some known results
that will be needed for the subsequent work. Section 3 contains the main results.  Question (i) is investigated
for systems with different degrees of regularity.  For question (ii) we give sufficient conditions for
the reachability of $\mathfrak{H}$ to imply the reachability of $\mathfrak{G}$. An extended result for small-time local
controllability is presented. Correspondence between
the notion of lifting ordinary control systems and morphisms of tautological control systems are examined. In Section 4 an example of second-order type control systems is studied. Compared with the main theorem for general tautological
control systems in Section 3,  we get supplementary results for this class of systems due to the special structure properties of second-order type.
\section{Preliminaries}
\label{sec:1}
Throughout the paper, manifolds are supposed to be second-countable Hausdorff manifolds. We will use the letter $n$ to denote the dimension of the manifold $M$. The tangent bundle of a manifold $M$
is denoted by $\pi_{\mathrm{TM}}: TM \rightarrow M$ and the cotangent bundle by $\pi_{\mathrm{T^{*}M}}: T^{*}M
\rightarrow M$. The derivative of a differentiable map $\Phi: M \to N$ is denoted by $T\Phi: TM \rightarrow TN$, with $T_{x}\Phi=
T\Phi|T_{x}M$. If $I \in \mathbb{R}$ is an interval and if $\xi: I \rightarrow M$ is a curve that is differentiable at $t \in I$,
we denote the tangent vector field to the curve at $t$ by $\xi{'}(t)=T_{t}\xi(1)$. Let $r \in \mathbb{Z}_{\geq 0} \cup \{\infty, \omega\}$.
The set of sections of a vector bundle $E \rightarrow M$ of class $C^r$ is denoted by $\mathrm{\Gamma}^{r}(E)$.  We denote by $\mathrm{C}^{r}(M, N)$ the set of mappings of class $C^{r}$ from $M$ to $N$.

For an interval $I$ and a topological space $\mathcal{X}$, a curve $\gamma: I \rightarrow \mathcal{X}$ is measurable
if $\gamma^{-1}(\mathcal{O})$ is Lebesgue measurable for every open $\mathcal{O} \subseteq \mathcal{X}$. By $\mathrm{L}^{\infty}(I; \mathcal{X})$
we denote the measurable curves $\gamma: I \rightarrow \mathcal{X}$ for which there exists a compact set $K \subseteq \mathcal{X}$
with $\lambda(\{t \in I| \gamma(t) \notin K\})=0$, where $\lambda$ is a Lebesgue measure, i.e., $\mathrm{L}^{\infty}(I; \mathcal{X})$ is the set of essentially
bounded curves. By $\mathrm{L}^{\infty}_\mathrm{loc}(I; \mathcal{X})$ we denote the locally essentially bounded curves,
meaning those measurable curves whose restrictions to compact subintervals are essentially bounded.

Let $m \in \mathbb{Z}_{\geq 0}$ and $m{'} \in \{0, \mathrm{lip}\}$, let $\nu \in \{m+m{'}, \infty, \omega\}$, and let
$r \in \{\infty, \omega\}$, depending from the context. For a manifold $M$ of class $C^r$ and an interval $I \subseteq \mathbb{R}$,
we denote the set of Caratheodory time-varying vector field of class $C^\nu$ by
$\mathrm{CF\Gamma}^{\nu}(I; TM)$, the set of locally integrally $C^\nu$-bounded time-varying vector fields by
$\mathrm{LI\Gamma}^{\nu}(I; TM)$, and the set of locally essentially $C^\nu$-bounded time-varying vector fields by
$\mathrm{LB\Gamma}^{\nu}(I; TM)$. For a manifold $M$ of class $C^r$ and a topological space $\mathcal{P}$, we denote the set of separately parameterised vector fields of class $C^\nu$ by
$\mathrm{SP\Gamma}^{\nu}(\mathcal{P}; TM)$, the set of jointly  parameterised vector fields of class $C^\nu$ by
$\mathrm{JP\Gamma}^{\nu}(\mathcal{P}; TM)$. For detailed explanations for these concepts, we refer to \cite{Ref8}.

\begin{definition}\cite{Ref8}
Let $m \in \mathbb{Z}_{\geq 0}$ and $m{'} \in \{0, \mathrm{lip}\}$, let $\nu \in \{m+m{'}, \infty, \omega\}$,
and let $r \in \{\infty, \omega\}$, as required. Let $M$ be a manifold of class $C^{r}$.  A presheaf of sets of $C^{\nu}$-vector fields is an assignment to each open set $\mathcal{U} \subseteq M$ a subset $\mathscr{F}(\mathcal{U})$ of $\Gamma^{\nu}(T\mathcal{U})$ with the property that, for open sets $\mathcal{U}, \mathcal{V} \subseteq M$ with $\mathcal{V} \subseteq \mathcal{U}$, the map
\begin{eqnarray}
r_{\mathcal{U}, \mathcal{V}}&:& \mathscr{F}(\mathcal{U}) \rightarrow \Gamma^{\nu}(T\mathcal{V})\nonumber\\
&&X \rightarrow X|\mathcal{V}\nonumber
\end{eqnarray}
takes values in $\mathscr{F}(\mathcal{V})$. Elements of $\mathscr{F}(\mathcal{U})$ are called local sections of
$\mathscr{F}$ over $\mathcal{U}$.
\end{definition}
\begin{example}
Let $m \in \mathbb{Z}_{\geq 0}$ and $m{'} \in \{0, \mathrm{lip}\}$, let $\nu \in \{m+m{'}, \infty, \omega\}$, and let $r \in \{\infty, \omega\}$, as required. Let $M$ be a manifold of class $C^{r}$. If $\mathscr{X} \subseteq \Gamma^{\nu}(TM)$ is any family of vector fields on $M$, then we can define an associated presheaf $\mathscr{F}_{\mathscr{X}}$ of sets of vector fields by
\[\mathscr{F}_{\mathscr{X}}(\mathcal{U})=\{X|\mathcal{U} \ | \ X \in \mathscr{X}\}.\]
A presheaf of this sort will be called globally generated.
\end{example}

With the above definition of presheaf in mind, we have the following definition of tautological control system.
\begin{definition}\cite{Ref8}
Let $m \in \mathbb{Z}_{\geq 0}$ and $m^{'} \in \{0, \mathrm{lip}\}$, let $\nu \in \{m+m^{'}, \infty, \omega\}$, and let
$r \in \{\infty, \omega\}$, as required. A $C^{\nu}$-tautological control system is a pair
$\mathfrak{G}=(M, \mathscr{F})$ where $M$ is a manifold of class $C^{r}$ whose elements are called states and where
$\mathscr{F}$ is a presheaf of sets of $C^{\nu}$ vector fields on $M$. A tautological control system $\mathfrak{G}=(M, \mathscr{F})$
is globally generated if $\mathscr{F}$ is globally generated.
\end{definition}

Now we introduce a notion of a trajectory for a tautological control system.  Since trajectories are associated to "open-loop systems",
we first discuss "open-loop systems". Let $\mathfrak{G}=(M, \mathscr{F})$ be a $C^{\nu}$-tautological control system. An open-loop system for $\mathfrak{G}$
is a triple $\mathfrak{G}_{ol}=(X, \mathbb{T}, \mathcal{U})$ where $\mathbb{T} \subseteq \mathbb{R}$ is an interval called
the time-domain, $\mathcal{U} \subseteq M$ is open and $X \in \mathrm{LI\Gamma^{\nu}}(\mathbb{T}; \mathscr{F}(\mathcal{U}))$.
An open-loop subfamily for $\mathfrak{G}$ is an assignment, to each interval $\mathbb{T} \subseteq \mathbb{R}$ and
each open set $\mathcal{U} \subseteq M$, a subset
$\mathscr{O}_{\mathfrak{G}}(\mathbb{T}, \mathcal{U}) \subseteq \mathrm{LI\Gamma^{\nu}}(\mathbb{T}; \mathscr{F}(\mathcal{U}))$ with the property that,
if $(\mathbb{T}_1, \mathcal{U}_1)$ and $(\mathbb{T}_2, \mathcal{U}_2)$ are such that $\mathbb{T}_1 \subseteq \mathbb{T}_2,\mathcal{U}_1 \subseteq \mathcal{U}_2$,
then \[\{X|\mathbb{T}_1 \times \mathcal{U}_1 \ | \ X \in \mathscr{O}_{\mathfrak{G}}(\mathbb{T}_2, \mathcal{U}_2)\} \subseteq \mathscr{O}_{\mathfrak{G}}(\mathbb{T}_1, \mathcal{U}_1).\]
The full subfamily for $\mathfrak{G}$ is the open-loop subfamily $\mathscr{O}_{\mathfrak{G}, \mathrm{full}}$ defined by
\[\mathscr{O}_{\mathfrak{G}, \mathrm{full}}(\mathbb{T}, \mathcal{U})= \mathrm{LI\Gamma^{\nu}}(\mathbb{T}; \mathscr{F}(\mathcal{U})).\]
The locally essentially bounded subfamily for $\mathfrak{G}$ is the open-loop subfamily $\mathscr{O}_{\mathfrak{G}, \infty}$
defined by
 \[\mathscr{O}_{\mathfrak{G}, \infty}(\mathbb{T}, \mathcal{U})=\{X \in \mathscr{O}_{\mathfrak{G}, \mathrm{full}}(\mathbb{T}, \mathcal{U}) \ | \ X \in
\mathrm{LB\Gamma}^{\nu}(\mathbb{T}; T\mathcal{U})\}.\]
The locally essentially compact subfamily for $\mathfrak{G}$ is the open-loop subfamily $\mathscr{O}_{\mathfrak{G}, \mathrm{cpt}}$ defined by
\begin{eqnarray}
\mathscr{O}_{\mathfrak{G}, \mathrm{cpt}}(
\mathbb{T}, \mathcal{U})&=&\{X \in \mathscr{O}_{\mathfrak{G}, \mathrm{full}}(\mathbb{T}, \mathcal{U}) \ |
\ \text{for every compact subinterval
}\ \mathbb{T{'} \subseteq \mathbb{T}}\nonumber\\
&&\text{there exists a compact} \ K \subseteq \mathrm{\Gamma}^{\nu}(T\mathcal{U}) \nonumber\\
&&\text{such that} \ X(t) \in K \
\text{for almost every} \ t \in \mathbb{T{'}}\}.\nonumber
\end{eqnarray}
The piecewise constant subfamily for $\mathfrak{G}$ is the open-loop subfamily $\mathscr{O}_{\mathfrak{G}, \mathrm{pwc}}$ defined by
\[\mathscr{O}_{\mathfrak{G}, \mathrm{pwc}}(\mathbb{T}, \mathcal{U})=\{X \in \mathscr{O}_{\mathfrak{G}, \mathrm{full}}(\mathbb{T}, \mathcal{U})
\ | \
t \rightarrow X(t) \ \text{is piecewise constant}\}.\]
A $(\mathbb{T}, \mathcal{U})$-trajectory for $\mathscr{O}_{\mathfrak{G}}$ is a curve $\xi: \mathbb{T}\rightarrow \mathcal{U}$
such that $\xi{'}(t)=X(t, \xi(t))$ for some $X \in \mathscr{O}_{\mathfrak{G}}(\mathbb{T}, \mathcal{U})$. A plain trajectory for $\mathscr{O}_{\mathfrak{G}}$
is a curve that is a $(\mathbb{T}, \mathcal{U})$-trajectory for $\mathscr{O}_{\mathfrak{G}}$ for some time domain
$\mathbb{T}$ and some open set $ \mathcal{U} \subseteq M$. We denote the set of $(\mathbb{T}, \mathcal{U})$-trajectories
for $\mathscr{O}_{\mathfrak{G}}$ by $\mathrm{Traj}(\mathbb{T}, \mathcal{U}, \mathscr{O}_{\mathfrak{G}})$ and
the set of trajectories for $\mathscr{O}_{\mathfrak{G}}$ by $\mathrm{Traj}(\mathscr{O}_{\mathfrak{G}})$.
Then according to \cite{Ref11}, we can assert that for every $x \in M, s \in \mathbb{R}$ and
$X \in \mathscr{O}_{\mathfrak{G}, \mathrm{full}}(\mathbb{T}, \mathcal{U})$, there exists an open interval
$J_{X}(s,x) \subseteq \mathbb{T}$ containing $s$ such that  $\xi{'}(t)=X(t, \xi(t)), \xi(s)=x$ for almost all
$t \in J_{X}(s,x)$.

In the following, we introduce the concept of morphism of tautological control systems.
Let $m \in \mathbb{Z}_{\geq 0}$ and $m{'} \in \{0, \mathrm{lip}\}$, let $\nu \in \{m+m{'}, \infty, \omega\}$,
and let $r \in \{\infty, \omega\}$, as required. Let $\mathfrak{G}=(M, \mathscr{F})$ be a $C^{\nu}$-tautological control
system, let $N$ be a $C^r$-manifold, and let $\Phi \in C^{r}(M, N).$ The direct image of $\mathfrak{G}$ by $\Phi$ is the
tautological control system $\Phi_{*}\mathfrak{G}=(N, \Phi_{*}\mathscr{F})$ defined by
$\Phi_{*}\mathscr{F}(\mathcal{V})=\mathscr{F}(\Phi^{-1}(\mathcal{V}))$ for $\mathcal{V} \subseteq N$ open.
\begin{definition}\cite{Ref8}\label{Def3}
Let $m \in \mathbb{Z}_{\geq 0}$ and $m{'} \in \{0, \mathrm{lip}\}$, let $\nu \in \{m+m{'}, \infty, \omega\}$,
and let $r \in \{\infty, \omega\}$, as required. Let $\mathfrak{G}=(M, \mathscr{F})$ and $\mathfrak{H}=(N, \mathscr{G})$
be $C^{\nu}$-tautological control systems. A morphism from $\mathfrak{G}$ to $\mathfrak{H}$ is a pair $(\Phi, \Phi^{\#})$ such that \\
(i) $\Phi \in C^{r}(M, N)$ and \\
(ii) $\Phi^{\#}=(\Phi^{\#}_{\mathcal{V}})_{{\mathcal{V} \mathrm{open}}}$ is a family of mappings $\Phi^{\#}_{\mathcal{V}}: \mathscr{G}(\mathcal{V}) \rightarrow \Phi_{*}\mathscr{F}(\mathcal{V}), \mathcal{V} \subseteq N$ defined as follows:

(a) there exists a family $L_{\mathcal{V}} \in \mathrm{L}(\mathrm{\Gamma^{\mathcal{\nu}}}(T\mathcal{V}); \mathrm{\Gamma^{\mathcal{\nu}}}(T(\Phi^{-1}(\mathcal{V}))) )$ of continuous linear mappings satisfying $L_{{\mathcal{V}{'}}}=L_{\mathcal{V}}|\mathrm{\Gamma^{\mathcal{\nu}}}(T\mathcal{V}{'})$ if $\mathcal{V}, \mathcal{V}{'} \subseteq N$ are open with $\mathcal{V}{'} \subseteq \mathcal{V}$;

(b) $\Phi^{\#}_{\mathcal{V}}=L_{\mathcal{V}}|\mathscr{G}(\mathcal{V})$.
\end{definition}

From the point of view of control theory, one wishes to restrict the above definition further to account for the
fact that morphisms ought to preserve trajectories.
\begin{definition}\cite{Ref8}
Let $m \in \mathbb{Z}_{\geq 0}$ and $m{'} \in \{0, \mathrm{lip}\}$, let $\nu \in \{m+m{'}, \infty, \omega\}$,
and let $r \in \{\infty, \omega\}$, as required. Let $\mathfrak{G}=(M, \mathscr{F})$ and $\mathfrak{H}=(N, \mathscr{G})$
be $C^{\nu}$-tautological control systems. A morphism $(\Phi, \Phi^{\#})$  from $\mathfrak{G}$ to $\mathfrak{H}$ is trajectory-preserving if, for each time-domain
$\mathbb{T}$, each open $\mathcal{V} \subseteq N$, and each $Y \in \mathrm{LI\Gamma}^{\nu}(\mathbb{T};
\mathscr{G}(\mathcal{V}))$, any integral curve $\xi: \mathbb{T}{'} \rightarrow \Phi^{-1}(\mathcal{V})$
for the time-varying vector field $t \rightarrow \Phi^{\#}(Y_t)$ defined on $\mathbb{T}{'} \subseteq \mathbb{T}$
has the property that $\Phi \circ \xi$ is an integral curve for Y.
\end{definition}

We have the following characterisation of trajectory-preserving morphisms.
\begin{proposition}\cite{Ref8}
Let $m \in \mathbb{Z}_{\geq 0}$ and $m{'} \in \{0, \mathrm{lip}\}$, let $\nu \in \{m+m{'}, \infty, \omega\}$,
and let $r \in \{\infty, \omega\}$, as required. Let $\mathfrak{G}=(M, \mathscr{F})$ and $\mathfrak{H}=(N, \mathscr{G})$
be $C^{\nu}$-tautological control systems. A morphism $(\Phi, \Phi^{\#})$  from $\mathfrak{G}$ to $\mathfrak{H}$ is trajectory-preserving if and only if, for each open
$\mathcal{V} \subseteq N$, each $Y \in \mathscr{G}(\mathcal{V})$, each $y \in \mathcal{V}$, and each $x \in \Phi^{-1}(y)$,
we have $T_{x}\Phi(\Phi^{\#}(Y)(x))=Y(y)$.
\end{proposition}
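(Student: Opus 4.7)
The proof is an equivalence between an integral condition (trajectory preservation) and a pointwise tangent condition ($\Phi$-relatedness of $\Phi^\#(Y)$ and $Y$). My approach is to handle the $(\Leftarrow)$ direction by a chain rule computation along a given integral curve, and to handle the $(\Rightarrow)$ direction by producing a concrete integral curve through a prescribed point and then differentiating at that point.

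For the sufficiency direction, suppose the pointwise identity $T_x\Phi(\Phi^\#(Y)(x))=Y(y)$ holds for all data as stated. Fix a time-domain $\mathbb{T}$, an open $\mathcal{V}\subseteq N$, a vector field $Y\in\mathrm{LI\Gamma}^{\nu}(\mathbb{T};\mathscr{G}(\mathcal{V}))$, and an integral curve $\xi:\mathbb{T}'\to\Phi^{-1}(\mathcal{V})$ of $t\mapsto\Phi^\#(Y_t)$. Since $\xi$ is locally absolutely continuous and $\Phi$ is of class $C^r$, the composition $\Phi\circ\xi$ is locally absolutely continuous and the chain rule gives $(\Phi\circ\xi)'(t)=T_{\xi(t)}\Phi\cdot\xi'(t)$ for almost every $t\in\mathbb{T}'$. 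Substituting $\xi'(t)=\Phi^\#(Y_t)(\xi(t))$ and applying the pointwise hypothesis at the point $y=\Phi(\xi(t))$ and $x=\xi(t)\in\Phi^{-1}(y)$ (with the section $Y_t\in\mathscr{G}(\mathcal{V})$), I obtain $(\Phi\circ\xi)'(t)=Y_t(\Phi(\xi(t)))$ for a.e.\ $t$, so $\Phi\circ\xi$ is an integral curve of $Y$.

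For the necessity direction, let $\mathcal{V}\subseteq N$ be open, $Y\in\mathscr{G}(\mathcal{V})$, $y\in\mathcal{V}$, and $x\in\Phi^{-1}(y)$. I view $Y$ as the constant time-varying vector field $t\mapsto Y$ on an arbitrary interval $\mathbb{T}\ni 0$; this clearly lies in $\mathrm{LI\Gamma}^{\nu}(\mathbb{T};\mathscr{G}(\mathcal{V}))$. The vector field $\Phi^\#(Y)=L_\mathcal{V}(Y)\in\Gamma^{\nu}(T\Phi^{-1}(\mathcal{V}))$ supplied by Definition~\ref{Def3} is, together with the regularity afforded by $\nu$, sufficient to guarantee the existence (via the appropriate Carath\'eodory/classical existence theorem invoked in the framework of \cite{Ref11}) of a small interval $\mathbb{T}'\ni 0$ and an integral curve $\xi:\mathbb{T}'\to\Phi^{-1}(\mathcal{V})$ with $\xi(0)=x$. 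By trajectory preservation, $\Phi\circ\xi$ is an integral curve of $Y$ with $(\Phi\circ\xi)(0)=\Phi(x)=y$. Evaluating $(\Phi\circ\xi)'(0)$ two ways using the chain rule gives $T_x\Phi(\Phi^\#(Y)(x))=Y(y)$, as required.

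The only real subtlety, rather than obstacle, is bookkeeping the regularity assumptions so that the chain rule is legitimate and so that integral curves with prescribed initial condition exist. The chain rule step works on the almost-everywhere set where $\xi$ is differentiable and $t\mapsto Y_t$ has the pointwise sense of a vector field, while the existence of $\xi$ in the reverse direction rests on the continuous-linear extension $L_\mathcal{V}$ supplying a bona fide $C^{\nu}$ vector field on $\Phi^{-1}(\mathcal{V})$. Once these points are in place, the argument reduces to a single application of the chain rule in each direction.
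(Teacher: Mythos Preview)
The paper does not supply its own proof of this proposition: it is quoted verbatim from \cite{Ref8} in the Preliminaries section and left unproved there. So there is no in-paper argument to compare against.

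That said, your argument is correct and is the expected one. The $(\Leftarrow)$ direction is a clean chain-rule computation, and the $(\Rightarrow)$ direction is handled by specialising to an autonomous (time-constant) $Y$, which makes the integral curve $\xi$ genuinely $C^1$ near $t=0$ so that evaluation of the derivative at a single point is legitimate rather than merely an a.e.\ statement. The only place one might ask for a word more is the case $\nu=0$ (merely continuous vector fields), where uniqueness of integral curves may fail; but you only need existence through the prescribed initial point, and Peano's theorem supplies that together with $C^1$ regularity of the solution, so the evaluation at $t=0$ still goes through.
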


Based on the above proposition, we then introduce the following definition, which will be essential in our main reachability
result to be proved in Section 3.
\begin{definition}
Let $m \in \mathbb{Z}_{\geq 0}$ and $m{'} \in \{0, \mathrm{lip}\}$, let $\nu \in \{m+m{'}, \infty, \omega\}$,
and let $r \in \{\infty, \omega\}$, as required. Let $\mathfrak{G}=(M, \mathscr{F})$ and $\mathfrak{H}=(N, \mathscr{G})$
be $C^{\nu}$-tautological control systems. We say that the morphism $(\Phi, \Phi^{\#})$  from $\mathfrak{G}$ to $\mathfrak{H}$ is
global in time if for each $s \in \mathbb{R} $, each open
$\mathcal{V} \subseteq N$, each $Y \in \mathscr{G}(\mathcal{V})$, and each $x \in \Phi^{-1}(\mathcal{V})$,
we have $J_{\Phi^{\#}(Y)}(s, x)=J_{Y}(s, \Phi(x))$.
\end{definition}

In the remaining part of this section, we will discuss correspondences between ordinary control systems and tautological
control systems.
\begin{definition}\cite{Ref8}
Let $m \in \mathbb{Z}_{\geq 0}$ and $m{'} \in \{0, \mathrm{lip}\}$, let $\nu \in \{m+m{'}, \infty, \omega\}$, and let
$r \in \{\infty, \omega\}$, as required. A $C^{\nu}$-control system is a triple $\Sigma=(M, F, \mathfrak{C})$, where

(i) M is a $C^r$-manifold whose elements are called states.

(ii) $\mathfrak{C}$ is a topological space called the control set and

(iii) $F \in \mathrm{JP\Gamma}^{\nu}(\mathfrak{C};  TM)$.
\end{definition}
\begin{proposition}\cite{Ref8}\label{prop2}
Let $m \in \mathbb{Z}_{\geq 0}$ and $m{'} \in \{0, \mathrm{lip}\}$, let $\nu \in \{m+m{'}, \infty, \omega\}$, and let
$r \in \{\infty, \omega\}$, as required. Let  $\Sigma=(M, F, \mathfrak{C})$ be a $C^{\nu}$-control system. If $\mu \in
\mathrm{L}^{\infty}_\mathrm{loc}(\mathbb{T}; \mathfrak{C})$ then $F^{\mu} \in \mathrm{LB\Gamma}^{\nu}(\mathbb{T}; TM)$, where
$F^{\mu}: \mathbb{T} \times M \rightarrow TM$ is defined by $F^{\mu}(t,x)=F(x, \mu(t)).$
\end{proposition}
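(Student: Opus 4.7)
The plan is to exploit the fact that, for a jointly parameterised vector field $F \in \mathrm{JP}\Gamma^{\nu}(\mathfrak{C}; TM)$, the associated map $\hat{F}: \mathfrak{C} \to \Gamma^{\nu}(TM)$ defined by $\hat{F}(u) = F(\cdot, u)$ is continuous with respect to the appropriate $C^{\nu}$-topology on $\Gamma^{\nu}(TM)$ (weak-PB, compact-open, or real-analytic topology, as $\nu$ dictates). This is really the characterising feature of joint, as opposed to separate, parameterisation, and it is the sole analytic input I will need. The rest is a measurability-plus-compactness argument.

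First I would recast $F^{\mu}$ as the composition $t \mapsto \hat{F}(\mu(t))$ of the measurable curve $\mu: \mathbb{T} \to \mathfrak{C}$ with the continuous map $\hat{F}$. Since the composition of a measurable map with a continuous one is measurable, this shows that $t \mapsto F^{\mu}(t)$ is a measurable curve in $\Gamma^{\nu}(TM)$, so $F^{\mu}$ is a Carath\'eodory time-varying vector field of class $C^{\nu}$, i.e., $F^{\mu} \in \mathrm{CF}\Gamma^{\nu}(\mathbb{T}; TM)$.

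Next I would upgrade this to local essential boundedness in the $C^{\nu}$ sense. Fix a compact subinterval $\mathbb{T}' \subseteq \mathbb{T}$. By the hypothesis $\mu \in \mathrm{L}^{\infty}_{\mathrm{loc}}(\mathbb{T}; \mathfrak{C})$, there exists a compact subset $K \subseteq \mathfrak{C}$ with $\mu(t) \in K$ for almost every $t \in \mathbb{T}'$. Continuity of $\hat{F}$ then gives that $\hat{F}(K)$ is a compact subset of $\Gamma^{\nu}(TM)$, and $F^{\mu}(t) = \hat{F}(\mu(t)) \in \hat{F}(K)$ for almost every $t \in \mathbb{T}'$. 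This is precisely the defining property of $\mathrm{LB}\Gamma^{\nu}(\mathbb{T}; TM)$, so the claim follows.

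The main obstacle, insofar as there is one, is the real-analytic case $\nu = \omega$: there the topology on $\Gamma^{\omega}(TM)$ is the non-metrisable inductive-limit topology used in \cite{Ref11}, and one has to invoke the definition of $\mathrm{JP}\Gamma^{\omega}$ in that setting to know that $\hat{F}$ is continuous into this topology and that continuous images of compact sets remain compact. Once those facts are taken for granted (as part of the definitions adopted from \cite{Ref8,Ref11}), the argument above goes through verbatim for all the regularity classes $\nu \in \{m+m', \infty, \omega\}$ considered in the proposition.
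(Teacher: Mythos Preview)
The paper does not supply its own proof of this proposition; it is quoted from \cite{Ref8} as background in the preliminaries, with no argument given. Your proof is correct and is essentially the argument one finds in \cite{Ref8}: the defining property of $F \in \mathrm{JP}\Gamma^{\nu}(\mathfrak{C};TM)$ is exactly that $u \mapsto F^{u}$ be continuous into $\Gamma^{\nu}(TM)$, so composing with the measurable, locally essentially bounded control $\mu$ gives a measurable curve in $\Gamma^{\nu}(TM)$ whose essential range on each compact subinterval lies in the compact set $\hat{F}(K)$.
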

Let  $\Sigma=(M, F, \mathfrak{C})$ be a $C^{\nu}$-control system. For an interval $\mathbb{T} \subseteq \mathbb{R}$,
a $\mathbb{T}$-trajectory is a locally absolutely continuous curve $\xi: \mathbb{T} \rightarrow M$ for which there exists
$\mu \in \mathrm{L}^{\infty}_\mathrm{loc}(\mathbb{T}; \mathfrak{C})$ such that
$\xi{'}(t)=F(\xi(t), \mu(t)), a.e. \ t \in \mathbb{T}.$  The set of $\mathbb{T}$-trajectories we denote by
$\mathrm{Traj}(\mathbb{T}, \Sigma)$. If $\mathcal{U}$ is open, we denote by $\mathrm{Traj}(\mathbb{T},\mathcal{U},\Sigma)$
those trajectories taking value in $\mathcal{U}$. To this control system we associate the
$C^{\nu}$ tautological control system $\mathfrak{G}_{\Sigma}=(M, \mathscr{F}_{\Sigma})$ by
\[\mathscr{F}_{\Sigma}(\mathcal{U})=\{F^{u}|\mathcal{U}\in \mathrm{\Gamma}^{\nu}(T\mathcal{U})\ | \ u \in \mathfrak{C}\}.\]
The presheaf of sets of vector fields in this case is of the globally generated variety.

Suppose we have a $C^{\nu}$-tautological control system $\mathfrak{G}=(M, \mathscr{F})$
where $\mathscr{F}$ is globally generated. We define a $C^{\nu}$-control system
$\Sigma_{\mathfrak{G}}=(M, F_\mathscr{F}, \mathfrak{C}_{\mathscr{F}})$ as follows. We take $\mathfrak{C}_{\mathscr{F}}=
\mathscr{F}(M)$, i.e. the control set is our family of globally defined vector fields and the topology is that induced
from $\mathrm{\Gamma}^{\nu}(TM)$. We define
\begin{eqnarray}
F_{\mathscr{F}}&:& M \times \mathfrak{C}_{\mathscr{F}} \rightarrow TM\nonumber\\
&&(x, X) \rightarrow X(x)\nonumber
\end{eqnarray}
\begin{proposition}\cite{Ref8}
Let $m \in \mathbb{Z}_{\geq 0}$ and $m{'} \in \{0, \mathrm{lip}\}$, let $\nu \in \{m+m{'}, \infty, \omega\}$, and let
$r \in \{\infty, \omega\}$, as required. Let $\mathfrak{G}=(M, \mathscr{F})$ be a $C^{\nu}$-tautological control
system. Let  $\Sigma=(M, F, \mathfrak{C})$ be a $C^{\nu}$-control system. Then the following statements hold:\\
(i) if $\mathfrak{G}$ is globally generated, then $\mathfrak{G}_{\Sigma_{\mathfrak{G}}}=\mathfrak{G}$;\\
(ii) if the map $u \rightarrow F^{u}$ from $\mathfrak{C}$ to $\mathrm{\Gamma}^{\nu}(TM)$ is injective and open onto its image,
then $\Sigma_{\mathfrak{G}_{\Sigma}}=\Sigma.$
\end{proposition}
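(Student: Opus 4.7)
The plan is to proceed by directly unwinding the two constructions $\mathfrak{G} \mapsto \Sigma_{\mathfrak{G}}$ and $\Sigma \mapsto \mathfrak{G}_{\Sigma}$ given just before the statement, and verifying that composing them returns the original object under the hypotheses. Both parts reduce to a compatibility check between equalities of the relevant data: the state manifold (which is always $M$), the control set with its topology, and either the presheaf $\mathscr{F}$ or the jointly parameterised vector field $F$.

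For part (i), suppose $\mathfrak{G}=(M,\mathscr{F})$ is globally generated. Taking $\mathcal{U}=M$ in the Example defining globally generated presheaves shows that the generating family can be taken to be $\mathscr{F}(M)$ itself, so $\mathscr{F}(\mathcal{U})=\{X|\mathcal{U}\mid X\in\mathscr{F}(M)\}$ for every open $\mathcal{U}\subseteq M$. The construction of $\Sigma_{\mathfrak{G}}$ gives $\mathfrak{C}_{\mathscr{F}}=\mathscr{F}(M)$ and $F_{\mathscr{F}}(x,X)=X(x)$, so for each $X\in\mathfrak{C}_{\mathscr{F}}$ the associated vector field $F_{\mathscr{F}}^{X}$ is just $X$ itself. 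Passing back through $\mathfrak{G}_{\Sigma_{\mathfrak{G}}}$ then yields $\mathscr{F}_{\Sigma_{\mathfrak{G}}}(\mathcal{U})=\{X|\mathcal{U}\mid X\in\mathscr{F}(M)\}$, which coincides with $\mathscr{F}(\mathcal{U})$ precisely by the global-generation identity recalled above.

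For part (ii), let $\iota:\mathfrak{C}\to\Gamma^{\nu}(TM)$, $u\mapsto F^{u}$, and assume $\iota$ is injective and open onto its image. Continuity of $\iota$ is automatic because $F$ is jointly parameterised of class $C^{\nu}$, so together with injectivity and openness onto the image $\iota$ is a homeomorphism onto $\iota(\mathfrak{C})$. Unwinding the definitions, $\mathscr{F}_{\Sigma}(M)=\{F^{u}\mid u\in\mathfrak{C}\}=\iota(\mathfrak{C})$, endowed with the subspace topology from $\Gamma^{\nu}(TM)$, so $\iota$ identifies the control set $\mathfrak{C}$ of $\Sigma$ with the control set $\mathfrak{C}_{\mathscr{F}_{\Sigma}}$ of $\Sigma_{\mathfrak{G}_{\Sigma}}$ as topological spaces. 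Under this identification we compute $F_{\mathscr{F}_{\Sigma}}(x,\iota(u))=\iota(u)(x)=F^{u}(x)=F(x,u)$, so the parameterised vector fields agree as well, completing the identification $\Sigma_{\mathfrak{G}_{\Sigma}}=\Sigma$.

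The main obstacle is essentially bookkeeping: in (i) one must be careful that the global-generation hypothesis really forces $\mathscr{F}(\mathcal{U})$ to consist of all restrictions from $\mathscr{F}(M)$ (and not only on one distinguished open set); in (ii) the point is recognising that the two conditions on $\iota$ are exactly what is needed to turn a set-theoretic bijection $\mathfrak{C}\to\mathscr{F}_{\Sigma}(M)$ into a homeomorphism, which is what makes $\Sigma$ and $\Sigma_{\mathfrak{G}_{\Sigma}}$ coincide as control systems rather than merely as abstract triples. No deeper structural result is needed beyond the definitions themselves and the observation that Proposition \ref{prop2} ensures all measurability/boundedness data carry across.
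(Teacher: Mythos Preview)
The paper does not prove this proposition; it is quoted without proof from \cite{Ref8}, so there is no argument in the paper to compare against. Your unwinding of the two constructions is correct and is exactly the intended verification: in (i) the key observation that for a globally generated presheaf the generating family may be taken to be $\mathscr{F}(M)$ (since $X|M=X$) makes the rest immediate, and in (ii) the joint-parameterisation hypothesis gives continuity of $u\mapsto F^{u}$, so injectivity plus openness onto the image upgrades the set bijection $\mathfrak{C}\to\mathscr{F}_{\Sigma}(M)$ to a homeomorphism, after which the parameterised vector fields match on the nose. One small remark: your closing invocation of Proposition~\ref{prop2} is superfluous here, since neither measurability nor boundedness of time-varying data plays any role in identifying the two static structures $\Sigma$ and $\Sigma_{\mathfrak{G}_{\Sigma}}$; everything needed is already contained in the definitions of $\Sigma_{\mathfrak{G}}$ and $\mathfrak{G}_{\Sigma}$.
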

\begin{proposition}\cite{Ref8}
Let $m \in \mathbb{Z}_{\geq 0}$ and $m{'} \in \{0, \mathrm{lip}\}$, let $\nu \in \{m+m{'}, \infty, \omega\}$, and let
$r \in \{\infty, \omega\}$, as required.  Let  $\Sigma=(M, F, \mathfrak{C})$ be a $C^{\nu}$-control system with
$\mathfrak{G}_{\Sigma}$ the associated $C^{\nu}$-tautological control system. Then the following statements hold:\\
(i) $\mathrm{Traj}(\mathbb{T}, \mathcal{U}, \Sigma) \subseteq \mathrm{Traj}
(\mathbb{T}, \mathcal{U}, \mathscr{O}_{\mathfrak{G}_{\Sigma}, \text{cpt}})$;\\
(ii) if the map $u \rightarrow F^{u}$ is injective and proper, then $\mathrm{Traj}
(\mathbb{T}, \mathcal{U}, \mathscr{O}_{\mathfrak{G}_{\Sigma}, \text{cpt}}) \subseteq \mathrm{Traj}(\mathbb{T}, \mathcal{U}, \Sigma)$;\\
(iii) if $\mathfrak{C}$ is a Suslin topology space and if F is proper, then
$\mathrm{Traj}(\mathbb{T}, \mathcal{U}, \mathscr{O}_{\mathfrak{G}_{\Sigma}, \infty}) \subseteq \mathrm{Traj}(\mathbb{T}, \mathcal{U}, \Sigma)$;\\
(iv) if, in addition, $\nu \in \{\infty, \omega\}$, then we may replace $\mathrm{Traj}
(\mathbb{T}, \mathcal{U}, \mathscr{O}_{\mathfrak{G}_{\Sigma}, \text{cpt}})$ with $\mathrm{Traj}
(\mathbb{T}, \mathcal{U}, \mathscr{O}_{\mathfrak{G}_{\Sigma}, \infty})$ in statements (i) and (ii).
\end{proposition}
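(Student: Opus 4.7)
All four statements rest on the evaluation map $\iota_\mathcal{U}:\mathfrak{C}\to\Gamma^\nu(T\mathcal{U})$, $\iota_\mathcal{U}(u)=F^u|\mathcal{U}$, which is continuous because $F\in\mathrm{JP\Gamma}^\nu(\mathfrak{C};TM)$. My plan is to convert between a control curve $\mu$ and a time-varying vector-field curve $X$ via the identity $X(t)=\iota_\mathcal{U}(\mu(t))$, applying $\iota_\mathcal{U}$ in one direction for (i) and (iv) and a right inverse (or a measurable selection) in the other direction for (ii) and (iii).

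For (i), given a trajectory $\xi$ of $\Sigma$ driven by a control $\mu$, I take $X(t)=\iota_\mathcal{U}(\mu(t))$. Proposition \ref{prop2} places $F^\mu$ in $\mathrm{LB\Gamma}^\nu(\mathbb{T};TM)$, so $X\in\mathrm{LI\Gamma}^\nu(\mathbb{T};\mathscr{F}_\Sigma(\mathcal{U}))$. To upgrade to the compact subfamily I note that on any compact $\mathbb{T}'\subseteq\mathbb{T}$ there is a compact $K\subseteq\mathfrak{C}$ containing $\mu(t)$ almost everywhere, whence $X(t)\in\iota_\mathcal{U}(K)$, which is compact in $\Gamma^\nu(T\mathcal{U})$ by continuity of $\iota_\mathcal{U}$. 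I would prove (iv) immediately afterwards as a topological observation: when $\nu\in\{\infty,\omega\}$, $\Gamma^\nu(T\mathcal{U})$ is a nuclear space (Fr\'echet in the smooth case, Silva in the real analytic case) and therefore has the Heine--Borel property, so every locally essentially bounded curve is automatically locally essentially compact and $\mathscr{O}_{\mathfrak{G}_\Sigma,\infty}=\mathscr{O}_{\mathfrak{G}_\Sigma,\mathrm{cpt}}$.

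For (ii), given $\xi$ driven by some $X\in\mathscr{O}_{\mathfrak{G}_\Sigma,\mathrm{cpt}}(\mathbb{T},\mathcal{U})$, I need a right inverse to $\iota_\mathcal{U}$. Injectivity plus properness plus the Hausdorff target $\Gamma^\nu(T\mathcal{U})$ force $\iota_\mathcal{U}$ to be a closed topological embedding, so $\mu(t)=\iota_\mathcal{U}^{-1}(X(t))$ is well defined and Borel measurable; properness then pulls a compactness witness $K'\subseteq\Gamma^\nu(T\mathcal{U})$ for $X$ back to the compact set $\iota_\mathcal{U}^{-1}(K')\subseteq\mathfrak{C}$ containing $\mu(t)$ a.e., giving $\mu\in\mathrm{L}^\infty_{\mathrm{loc}}(\mathbb{T};\mathfrak{C})$. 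Then $\xi'(t)=X(t,\xi(t))=F(\xi(t),\mu(t))$ is exactly what is needed for a trajectory of $\Sigma$.

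Part (iii) is the main obstacle because injectivity is dropped and $\iota_\mathcal{U}$ can no longer be inverted pointwise; my plan here is to invoke the Jankov--von Neumann measurable selection theorem. The relation $R=\{(t,u)\in\mathbb{T}\times\mathfrak{C}:\iota_\mathcal{U}(u)=X(t)\}$ is analytic because $X$ is Borel measurable, $\iota_\mathcal{U}$ is continuous, and $\mathfrak{C}$ is Suslin; the theorem then yields a universally measurable selection $\mu:\mathbb{T}\to\mathfrak{C}$, and properness of $F$ again propagates the essential boundedness of $X$ on compact subintervals to essential boundedness of $\mu$. The delicate point is verifying the analyticity of $R$ in the Suslin setting and confirming that the measurable selection produced can be modified on a null set so as to genuinely land in $\mathrm{L}^\infty_{\mathrm{loc}}(\mathbb{T};\mathfrak{C})$; this is where the Suslin hypothesis is indispensable.
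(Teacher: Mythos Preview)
The paper does not supply its own proof of this proposition: it is quoted verbatim from \cite{Ref8} as background material in Section~2, with no accompanying proof environment. Consequently there is nothing in the present paper to compare your argument against.

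That said, your outline is essentially the standard argument one finds in Lewis's original source. The conversion between $\mu$ and $X$ via $\iota_{\mathcal{U}}$, the use of continuity of $\iota_{\mathcal{U}}$ to push compacta forward for~(i), the homeomorphism-onto-image argument from injectivity plus properness for~(ii), the Heine--Borel (Montel) property of $\Gamma^{\infty}$ and $\Gamma^{\omega}$ for~(iv), and the measurable selection theorem in the Suslin setting for~(iii) are precisely the ingredients Lewis employs. One caution: in~(ii) the hypothesis is that $u\mapsto F^{u}\in\Gamma^{\nu}(TM)$ is injective and proper, whereas your map $\iota_{\mathcal{U}}$ lands in $\Gamma^{\nu}(T\mathcal{U})$; restriction to $\mathcal{U}$ can in principle destroy injectivity. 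You should either work with the global map and note that each $X(t)\in\mathscr{F}_{\Sigma}(\mathcal{U})$ is, by the globally generated structure of $\mathscr{F}_{\Sigma}$, the restriction of a unique element of $\mathscr{F}_{\Sigma}(M)$ (uniqueness following from injectivity of the global map), or argue directly that the compact witness for $X$ can be taken inside $\mathscr{F}_{\Sigma}(\mathcal{U})$ viewed as the continuous image of $\mathscr{F}_{\Sigma}(M)$.
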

\section{Morphisms of tautological control systems}
\label{sec:2}
\subsection{Existence of tautological control system}
\label{sec:3}
\begin{theorem}\label{thm1}
Let $m \in \mathbb{Z}_{\geq 0}$ and $m{'} \in \{0, \mathrm{lip}\}$, let $\nu \in \{m+m{'}, \infty\}$,
and let $r=\infty$, as required. Let $\mathfrak{H}=(N, \mathscr{G})$ be a $C^{\nu}$-tautological control system that is
globally generated by a family of linearly independent vector fields, i.e. pointwise-independent everywhere.
Let $M$ be a $C^{r}$ manifold and $\Phi: M \to N$ be a submersion. Then there exists a globally generated $C^{\nu}$-tautological control system
$\mathfrak{G}=(M, \mathscr{F})$ such that there exists  a trajectory-preserving morphism $(\Phi, \Phi^{\#})$ from $\mathfrak{G}$ to $\mathfrak{H}$.
\end{theorem}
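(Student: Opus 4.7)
The plan is to construct the morphism by building a linear lifting operator via a smooth splitting of the short exact sequence of vector bundles induced by the submersion $\Phi$, and then to take $\mathscr{F}$ to be the presheaf globally generated by the lifts of the generators of $\mathscr{G}$.

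First I would exploit the fact that $\Phi$ is a $C^r$-submersion to get the short exact sequence of $C^{r-1}$ vector bundles over $M$
\begin{equation*}
0 \longrightarrow \ker T\Phi \longrightarrow TM \xrightarrow{T\Phi} \Phi^{*}TN \longrightarrow 0.
\end{equation*}
Since $r = \infty$ and $\nu \neq \omega$, partitions of unity of class $C^{\infty}$ are available, so I would construct a global $C^{\infty}$ bundle splitting $s: \Phi^{*}TN \to TM$ of this sequence (for example, by choosing a Riemannian metric on $M$ and taking the orthogonal complement of $\ker T\Phi$). By construction $T_{x}\Phi \circ s(x,\cdot) = \mathrm{id}_{T_{\Phi(x)}N}$ at every $x \in M$.

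Next, for each open $\mathcal{V} \subseteq N$ I would define
\begin{equation*}
L_{\mathcal{V}}: \Gamma^{\nu}(T\mathcal{V}) \longrightarrow \Gamma^{\nu}(T\Phi^{-1}(\mathcal{V})), \qquad L_{\mathcal{V}}(Y)(x) = s\bigl(x, Y(\Phi(x))\bigr),
\end{equation*}
and check three things: (a) $L_{\mathcal{V}}(Y)$ has the claimed regularity, which follows because $Y \circ \Phi$ is $C^{\nu}$ (composition with a $C^{\infty}$ map preserves $C^{\nu}$ regularity for $\nu \in \{m+m',\infty\}$) and $s$ is $C^{\infty}$; (b) linearity and continuity in the standard $C^{\nu}$ topologies on sections, which reduce to continuity of pullback and of evaluation against a smooth bundle map; and (c) the restriction compatibility $L_{\mathcal{V}'} = L_{\mathcal{V}}|\Gamma^{\nu}(T\mathcal{V}')$ whenever $\mathcal{V}' \subseteq \mathcal{V}$, which is immediate from the pointwise formula.

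Then I would let $\{Y_{\alpha}\}_{\alpha \in A}$ be the globally generating pointwise-independent family for $\mathscr{G}$, set $X_{\alpha} = L_{N}(Y_{\alpha}) \in \Gamma^{\nu}(TM)$, and define $\mathfrak{G} = (M,\mathscr{F})$ to be the $C^{\nu}$-tautological control system globally generated by $\{X_{\alpha}\}_{\alpha \in A}$, i.e.\ $\mathscr{F}(\mathcal{U}) = \{X_{\alpha}|\mathcal{U} \mid \alpha \in A\}$. By the restriction compatibility, $L_{\mathcal{V}}(Y_{\alpha}|\mathcal{V}) = X_{\alpha}|\Phi^{-1}(\mathcal{V})$, so $\Phi^{\#}_{\mathcal{V}} := L_{\mathcal{V}}|\mathscr{G}(\mathcal{V})$ really does take values in $\Phi_{*}\mathscr{F}(\mathcal{V}) = \mathscr{F}(\Phi^{-1}(\mathcal{V}))$, and $(\Phi,\Phi^{\#})$ satisfies Definition \ref{Def3}. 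Finally, the trajectory-preservation criterion is verified by the computation $T_{x}\Phi(\Phi^{\#}(Y)(x)) = T_{x}\Phi(s(x,Y(\Phi(x)))) = Y(\Phi(x))$ for every $Y \in \mathscr{G}(\mathcal{V})$ and every $x \in \Phi^{-1}(y)$, which is exactly the characterisation in the proposition preceding Definition 5.

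I expect the only genuine obstacle to be the construction of the global splitting $s$; this is where the exclusion of the real analytic case ($\nu \neq \omega$) is crucial, because in the smooth/finitely differentiable setting a partition of unity produces $s$ without difficulty, whereas in the $C^{\omega}$ setting no such global splitting is guaranteed. The pointwise linear independence of $\{Y_{\alpha}\}$ enters only tangentially: combined with the splitting property $T_{x}\Phi \circ s(x,\cdot) = \mathrm{id}$, it implies that $\{X_{\alpha}(x)\}$ is also pointwise linearly independent on $M$, so the constructed $\mathfrak{G}$ inherits the same structural hypothesis as $\mathfrak{H}$, which will presumably be useful downstream.
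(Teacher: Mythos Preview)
Your proof is correct and follows the same underlying idea as the paper---use a partition of unity to produce global lifts of vector fields through the submersion $\Phi$---but you organise the construction differently. The paper lifts each generator $Y \in \mathscr{G}(N)$ individually: it takes local $\Phi$-related lifts $X_{\alpha}$ of $Y$ on a cover $\{\mathcal{U}_{\alpha}\}$ and glues them with a subordinate partition of unity to obtain a global $X$ with $T_{x}\Phi(X(x)) = Y(\Phi(x))$; the linear independence hypothesis is then invoked (see the remark following the theorem) to argue that condition (ii) of Definition~\ref{Def3} can be satisfied. You instead build a single global $C^{\infty}$ splitting $s:\Phi^{*}TN \to TM$ of the exact sequence $0 \to \ker T\Phi \to TM \to \Phi^{*}TN \to 0$ and then define $L_{\mathcal{V}}(Y)(x) = s(x,Y(\Phi(x)))$. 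This buys you two things: the linearity and restriction compatibility of the family $(L_{\mathcal{V}})$ are immediate from the pointwise formula, and the continuity check reduces to standard facts about pullback and pushforward by smooth bundle maps---so condition (ii) is verified directly without appealing to linear independence of the generators. Your observation that the pointwise independence hypothesis is not actually needed for the existence of the morphism (only for downstream use) is accurate and sharpens the paper's presentation.
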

\begin{proof}
Due to the local representatives for the submersion $\Phi$, for each $Y \in \mathscr{G}(N)$,
and each $x_{\alpha} \in M$, there exist a neighbourhood $\mathcal{U}_{\alpha} \subseteq M$ of $x_{\alpha}$
and a $C^{\nu}$-vector field $X_{\alpha}$ defined on $\mathcal{U}_{\alpha}$, such that $T_{x}\Phi(X_{\alpha})=Y(\Phi(x))$
for each $x \in \mathcal{U}_{\alpha}$. Then we get an open covering
$\{\mathcal{U}_{\alpha}\}$ of $M$. Since $M$ is a second-countable Hausdorff manifold, let
$\{(\mathcal{W}_i, g_i)\}$ be a partition of unity subordinated to the open covering $\{\mathcal{U}_{\alpha}\}$ such that
each open set $\mathcal{W}_i$ is a subset of an open set $\mathcal{U}_{\alpha(i)}$. Let $X$ be defined by
\[X(x)=\sum_{i}g_{i}X_{\alpha(i)}(x)\]
a finite sum at each $x \in M$, then $X$ is a $C^{\nu}$-vector field on $M$ such that
\[T_{x}\Phi(X)=\sum_{i}g_{i}T_{x}\Phi(X_{\alpha(i)}(x)))=\sum_{i}g_{i}Y(\Phi(x))=Y(\Phi(x)),\]
for every $x \in M$. Let $\Phi^{\#}(Y)=X$. Then for all $Y \in \mathscr{G}(N)$, this gives a family of $C^{\nu}$-vector fields
on $M$ denoted by $\mathscr{X}$. Let $\mathscr{F}$ be globally generated such that $\mathscr{F}(M)=\mathscr{X}$. It is easy to verify
that $(\Phi, \Phi^{\#})$ is a trajectory-preserving morphism from $\mathfrak{G}$ to $\mathfrak{H}$.
\end{proof}
\remark In the above theorem the assumption on $\mathfrak{H}=(N, \mathscr{G})$ that is
globally generated by a family of linearly independent vector fields is to make sure condition (ii) in Definition \ref{Def3} can be satisfied.
In \cite{Ref7}, existence of a lifted system is discussed in the context of ordinary control system, where a transpose mapping
$(d\Phi)^{T}$  of the differential mapping $d\Phi$ is used to construct the vector fields of the control system. We should point out
that the transpose mapping $(d\Phi)^{T}$ defined in a coordinate chart can not be defined globally as claimed in \cite{Ref7}.
When applicable, partition of unity is unavoidable to patch local constructions to give global constructions.

\remark From the point view of control theory, the above assumption on linear independence of vector fields
is restrictive. However, consider the control-affine system $\Sigma=(M, F, \mathfrak{C})$
with $F(x,u)=f_0(x)+\sum_{a=1}^{k}u^{a}f_a(x).$  Let $\bar {\mathfrak{H}}$ be its associated tautological control system.
Suppose that $f_0, f_1, \cdots, f_k$ are linear independent.
Then with similar arguments as given in the above theorem, we know that there exists a globally generated
tautological control system $\bar {\mathfrak{G}}=(M, \mathscr{F})$, which is associated to a control-affine system, such that there exists
a trajectory-preserving morphism $(\Phi, \Phi^{\#})$ from $\bar {\mathfrak{G}}$ to $\bar {\mathfrak{H}}$.

In the following we will discuss the case of real analytic system.  To get global results, we strengthen
assumptions on the structure of the mapping $\Phi$ because of the lack of partition of unity.
\begin{theorem}
Let $\mathfrak{H}=(N, \mathscr{G})$ be a $C^{\omega}$-tautological control system. Let
$\Phi: M \to N$ be a real analytic vector bundle. Then there exists a $C^{\omega}$-tautological control system
$\mathfrak{G}=(M, \mathscr{F})$ such that there exists  a trajectory-preserving morphism $(\Phi, \Phi^{\#})$ from $\mathfrak{G}$ to $\mathfrak{H}$.
\end{theorem}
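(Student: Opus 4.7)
The plan is to use the vector bundle structure on $\Phi: M \to N$ to construct vector field lifts globally, without appeal to partitions of unity (which are unavailable in the analytic category). The natural tool is horizontal lifting relative to a real analytic linear (Ehresmann) connection on the bundle, and the strengthened hypothesis on $\Phi$ (a vector bundle rather than merely a submersion) is precisely what makes such a connection available.

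I would first establish existence of a real analytic connection on $\Phi: M \to N$, which gives a real analytic horizontal distribution $H \subseteq TM$ complementary to the vertical bundle $V\Phi = \ker T\Phi$. Although $C^{\omega}$ partitions of unity are unavailable, this can be accomplished by complexifying $N$ to a Stein neighbourhood, extending the bundle to a holomorphic vector bundle, applying Cartan's Theorem B to the (coherent) affine sheaf of connections to obtain a global holomorphic connection, and then restricting to the real form. This step is the principal technical obstacle, since it requires deeper analytic machinery than the smooth partition-of-unity argument used in Theorem \ref{thm1}.

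Given such a connection, for each open $\mathcal{V} \subseteq N$ define the horizontal-lift operator $L_{\mathcal{V}}: \Gamma^{\omega}(T\mathcal{V}) \to \Gamma^{\omega}(T(\Phi^{-1}(\mathcal{V})))$ by $L_{\mathcal{V}}(Y) = Y^{h}$, where $Y^{h}(x)$ is the unique horizontal tangent vector at $x \in \Phi^{-1}(\mathcal{V})$ satisfying $T_{x}\Phi(Y^{h}(x)) = Y(\Phi(x))$. Since $Y^{h}$ is obtained by pulling $Y$ back along $\Phi$ and applying the real analytic bundle morphism $\Phi^{*}TN \to TM$ induced by the connection, $L_{\mathcal{V}}$ is $\mathbb{R}$-linear and continuous in the Fr\'echet topology on $\Gamma^{\omega}$, and the restriction compatibility $L_{\mathcal{V}}(Y)|_{\Phi^{-1}(\mathcal{V}')} = L_{\mathcal{V}'}(Y|_{\mathcal{V}'})$ is immediate since horizontal lifting is pointwise. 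Set $\Phi^{\#}_{\mathcal{V}} := L_{\mathcal{V}}|\mathscr{G}(\mathcal{V})$.

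Finally, define the presheaf $\mathscr{F}$ on $M$ by
\[
\mathscr{F}(\mathcal{U}) = \bigl\{\, L_{\mathcal{V}}(Y)|_{\mathcal{U}} \ \big| \ \mathcal{V} \subseteq N \text{ open},\ \mathcal{U} \subseteq \Phi^{-1}(\mathcal{V}),\ Y \in \mathscr{G}(\mathcal{V}) \,\bigr\}.
\]
The presheaf restriction axiom is immediate from this definition, so $\mathfrak{G} = (M, \mathscr{F})$ is a $C^{\omega}$-tautological control system, and the image of $\Phi^{\#}_{\mathcal{V}}$ lies in $\mathscr{F}(\Phi^{-1}(\mathcal{V})) = \Phi_{*}\mathscr{F}(\mathcal{V})$ (take $\mathcal{U} = \Phi^{-1}(\mathcal{V})$ in the definition), confirming that $(\Phi, \Phi^{\#})$ satisfies the conditions of Definition \ref{Def3}. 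The trajectory-preserving property then follows from the characterisation via Proposition 1, together with the defining identity $T_{x}\Phi(\Phi^{\#}(Y)(x)) = Y(\Phi(x))$ built into the horizontal lift.
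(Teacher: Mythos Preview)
Your proposal is correct and follows essentially the same route as the paper: both arguments obtain a real analytic linear connection on the vector bundle $\Phi: M \to N$, define $\Phi^{\#}_{\mathcal{V}}$ as the horizontal lift, and use the pointwise linearity of this lift to verify the morphism conditions and the trajectory-preserving identity $T_{x}\Phi(\Phi^{\#}(Y)(x)) = Y(\Phi(x))$. The only difference is that you sketch how to obtain the real analytic connection via complexification and Cartan's Theorem B, whereas the paper simply cites the existence result from \cite{Ref11}; your explicit definition of the presheaf $\mathscr{F}$ also fills in a detail the paper leaves implicit.
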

\begin{proof}
Since $\Phi: M \to N$ be a real analytic vector bundle, then according to \cite{Ref11}, there exists a real analytic
linear connection on $M$. Then for each open $\mathcal{V} \subseteq N$, each $Y \in \mathscr{F}(\mathcal{V})$, there exists
a unique $C^{\omega}$- vector field $X$ on $\Phi^{-1}(\mathcal{V})$ that is a horizontal lift of $Y$.
Let $\Phi^{\#}_{\nu}(Y)=X$. Since the mapping of horizontal lift of vector field is a point to point linear mapping, it is
easy to check that the condition (ii) in Definition \ref{Def3} can be satisfied. That is, $(\Phi, \Phi^{\#})$  is
a trajectory-preserving morphism from $\mathfrak{G}$ to $\mathfrak{H}$.
\end{proof}
\remark Given a $C^{\omega}$-tautological control system $\mathfrak{H}=(N, \mathscr{G})$, let $\Phi: M \to N$ be a submersion,
where $M$ is a real analytic manifold. Generally we can not assert the existence of $C^{\omega}$-tautological control system $\mathfrak{G}=(M, \mathscr{F})$
that satisfies the requirement of the above theorem. One reason is that in the real analytic case,
to get globally defined sections, the useful tool is coherent analytic sheaves instead of partition of unity.
We say that a $C^{\omega}$-vector field $X$ (function $f$) on the open set $U$ of $M$ is projectable
if there exists a $C^{\omega}$-vector field $Y$ (function $g$) on the open set $\Phi(U)$ of $N$ such that
$T\Phi(X)=Y$ ($g \circ \Phi=f$). Let $\mathscr{G}_{\Phi}^{\omega}(M)$ ($\mathscr{C}_{\Phi}^{\omega}(M)$)
denotes the presheaf of projectable vector fields (functions) on $M$ of class $C^{\omega}$.
Then $\mathscr{C}_{\Phi}^{\omega}(M)$ is a sheaf of rings over $M$ and $\mathscr{G}_{\Phi}^{\omega}(M)$ is a sheaf of $\mathscr{C}_{\Phi}^{\omega}(M)$-modules over $M$ that is not locally finitely generated.
In view of this, for any point $x \in M$, consider the local representatives for the submersion on the neighbourhood $U_x$ of $x$, $\Phi(x_1, \cdots, x_n)=(x_1, \cdots, x_m)$.
Let $\mathcal{F}(U_x)$ be the set of projectable vector fields on $U$ of class $C^{\omega}$ such that
$x_j(X)=0$, where $x_j$ denotes the $j$th coordinate of $X \in \mathcal{F}(U), j=m+1, \cdots, n.$
Then the set $B$ consisting of all open sets $V \subseteq U_x$ for all $x \in M$ is a basis for the topology on $M$.
It can be proved that under the restricting map $r_{U,V}$ we get a sheaf $\mathcal{F}$ of sets on $B$,
which gives a unique sheaf $\mathcal{F}^{\text ext}$ of $\mathscr{C}_{\Phi}^{\omega}(M)$-modules over $M$.
If $M$ is locally compact, it can be seen that $\mathcal{F}^{\text ext}$  is locally finitely generated.
However, analogous proof for Oka Coherence Theorem can not be applied to $\mathcal{F}^{\text ext}$
to prove the locally finitely generated properties for the sheaf of relations of sections over open set $U \subseteq M$.
\subsection{Reachability preserving morphisms of tautological control systems}
\label{sec:4}
In Section 2, a notion of a trajectory for a tautological control system has been introduced. It turns out
there is a limitation of  this sort of definition. In order to being able to use the full power of the tautological control system framework,
we will introduce an appropriately  extended notion of trajectory as given in \cite{Ref8}.
We consider the sheaf of time-varying
vector fields on $\mathbb{T} \times M$, where $\mathbb{T}$ is an interval. Let $m \in \mathbb{Z}_{\geq 0}$ and $m{'} \in \{0, \mathrm{lip}\}$, let $\nu \in \{m+m{'}, \infty, \omega\}$,
and let $r \in \{\infty, \omega\}$, let $\mathbb{T}{'} \subseteq \mathbb{T}$
and $\mathcal{U} \subseteq M$ be open. Define the sheaf $\mathrm{LI\mathscr{G}^{\nu}(\mathbb{T}, TM)}$ by
\[\mathrm{LI\mathscr{G}^{\nu}(\mathbb{T}, TM)}(\mathbb{T}{'} \times \mathcal{U})=L^{1}(\mathbb{T}{'},
\mathrm{\Gamma}^{\nu}(T\mathcal{U})),\]
where $L^{1}(\mathbb{T}{'}, \mathrm{\Gamma}^{\nu}(T\mathcal{U}))$ denotes the space of Bochner integrable functions with
values in $\mathrm{\Gamma}^{\nu}(T\mathcal{U})$. Consider the mapping $\mathscr{X}: \mathcal{W} \rightarrow
\mathrm{Et(\mathscr{G}^{\nu}_{TM})}$, where $\mathcal{W} \subseteq \mathbb{T} \times M$ is open, $\mathscr{G}^{\nu}_{TM}$ denotes the sheave of sections of
$TM$ of class $C^{\nu}$, and $\mathrm{Et(\mathscr{G}^{\nu}_{TM})}$ denotes the etale space of $\mathscr{G}^{\nu}_{TM}$.
Suppose $\mathscr{X}$ satisfies the following properties:

(i)$\mathscr{X}(t,x) \in \mathscr{G}^{\nu}_{TM, x}$;

(ii) for fixed $x$, $t \rightarrow \mathscr{X}(t,x)$ is locally integrable;

(iii) for fixed $t$, $x \rightarrow \mathscr{X}(t,x)$ is continuous in the etale topology.\\
From \cite{Ref8} we know the set of mappings from $\mathcal{W}$ into
$\mathrm{Et(\mathscr{G}^{\nu}_{TM})}$ satisfying (i) (ii) (iii) is an identification with local sections of
$\mathrm{LI\mathscr{G}^{\nu}}(\mathbb{T}, TM)$ over $\mathcal{W}$.  Let $\mathfrak{G}=(M, \mathscr{F})$ be
a $C^{\nu}$-tautological control system. An etale open-loop system is a local section over some $\mathcal{W} \subseteq \mathbb{T} \times M$ of $\mathrm{LI\mathscr{G}^{\nu}(\mathbb{T}, \mathscr{F})}$.
An etale open-loop subsystem is an assignment to each
$\mathcal{W} \subseteq \mathbb{T} \times M$ a subset $\mathcal{G}_{\mathcal{W}} \subseteq
\mathrm{LI\mathscr{G}^{\nu}(\mathbb{T}, \mathscr{F})}(\mathcal{W})$ such that if $\mathcal{W}_1 \subseteq \mathcal{W}_2$,
$r_{\mathcal{W}_{2}, \mathcal{W}_{1}}(\mathcal{G}_{\mathcal{W}_{2}}) \subseteq \mathcal{G}_{\mathcal{W}_{1}}$. we will use $\mathcal{G}_{\mathfrak{G}, \text{pwc}}$ to denote the piecewise constant etale open-loop subsystem for $\mathfrak{G}$.
\begin{definition}
A trajectory for $C^{\nu}$-tautological control system $\mathfrak{G}=(M, \mathscr{F})$ is an absolutely continuous curve $t \rightarrow \xi(t)$ such that
there exists $\mathrm{X}: \mathcal{W} \rightarrow Et(\mathscr{F})$ satisfying (i) (ii) (iii) for some $\mathcal{W} \subseteq \mathbb{T} \times M$ for
which $\xi{'}(t)=ev_{\xi(t)}(\mathrm{X}(t,\xi(t)))$ for almost every $t$, where $ev_x: \mathscr{G}^{\nu}_{TM, x} \rightarrow T_{x}M$ is
$ev_{x}([X]_{x})=X(x)$.
\end{definition}

We say that $x_1 \in M$ is reachable from $x_0 \in M$ at time $T \geq 0$ for $\mathfrak{G}=(M, \mathscr{F})$
if there exists a trajectory $\xi(t)$ for $\mathfrak{G}=(M, \mathscr{F})$ such that
$\xi(0)=x_0$ and $\xi(T)=x_1$. The reachable set from $x_0$ for $\mathfrak{G}=(M, \mathscr{F})$ is $\mathcal{R}_{\mathfrak{G}}(x_0)=\{x \in M \ |\
x \ \text{is reachable from}\ x_0 \ \text{at some time} \ t \geq 0 \  \text{for} \ \mathfrak{G}=(M, \mathscr{F})\}$.
We define $\mathcal{R}_{\mathfrak{G}}(x_0,  T)=\{x \in M \ |\
x \ \text{is reachable from}\ x_0 \ \text{at time} \ T  \  \text{for} \ \mathfrak{G}=(M, \mathscr{F})\}$.
The set of states reachable in time at most $T$ is given by $R_{\mathfrak{G}}(x_0, \leq T)=\cup_{t \in [0,T]}R_{\mathfrak{G}}(x_0, t)$.
We will say that the tautological control system $\mathfrak{G}$ is reachable from $x_0 \in M$ if $\mathcal{R}_{\mathfrak{G}}(x_0)=M.$
We will use $\mathcal{R}_{\mathfrak{G}}(x_0, \mathcal{G}_{\mathfrak{G}, \text{pwc}})$ to denote the set of points that are reachable from
$x_0 \in M$ by  the piecewise constant etale open-loop subsystem for $\mathfrak{G}$.
\begin{definition}
Let $m \in \mathbb{Z}_{\geq 0}$ and $m{'} \in \{0, \mathrm{lip}\}$, let $\nu \in \{m+m{'}, \infty, \omega\}$,
and let $r \in \{\infty, \omega\}$, as required. Let $\mathfrak{G}=(M, \mathscr{F})$ be a $C^{\nu}$-tautological control
system. We say a set $C \subseteq M$ is a reachability set for $\mathfrak{G}$ if for every $(x,z) \in C \times C$ the point
$z$ is reachable from the point $x$ at some time $t \geq 0$ for $\mathfrak{G}=(M, \mathscr{F})$.
\end{definition}
\begin{proposition}\label{prop5}
Let $m \in \mathbb{Z}_{\geq 0}$ and $m{'} \in \{0, \mathrm{lip}\}$, let $\nu \in \{m+m{'}, \infty, \omega\}$,
and let $r \in \{\infty, \omega\}$, as required. Let $\mathfrak{G}=(M, \mathscr{F})$ and $\mathfrak{H}=(N, \mathscr{G})$
be $C^{\nu}$-tautological control systems. Suppose that there exists a trajectory-preserving morphism $(\Phi, \Phi^{\#})$
from $\mathfrak{G}$ to $\mathfrak{H}$ that is global in time, $y_0 \in N$ is such that $y_0 \in \Phi(M)$ and
$\Phi(M) \subseteq \mathcal{R}_{\mathfrak{H}}(y_0, \mathcal{G}_{\mathfrak{H}, \mathrm{pwc}})$, and the fiber $\Phi^{-1}(y_0)$ is a reachability set for
$\mathfrak{G}$. Then for every $x_0 \in \Phi^{-1}(y_0)$ we have
$\mathcal{R}_{\mathfrak{G}}(x_0)=M.$
\end{proposition}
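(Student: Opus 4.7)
The plan is to show that every $x_1 \in M$ is reachable from any fixed $x_0 \in \Phi^{-1}(y_0)$ by building a trajectory of $\mathfrak{G}$ as the concatenation of two pieces: first a trajectory inside the fiber $\Phi^{-1}(y_0)$, then a lift of a piecewise constant etale trajectory of $\mathfrak{H}$ from $y_0$ to $y_1 := \Phi(x_1)$. First I would invoke the hypothesis $\Phi(M) \subseteq \mathcal{R}_{\mathfrak{H}}(y_0, \mathcal{G}_{\mathfrak{H}, \mathrm{pwc}})$ to produce such a trajectory $\eta : [0, T] \to N$ with $\eta(0) = y_0$ and $\eta(T) = y_1$. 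Partition $[0, T]$ into subintervals $[t_{i-1}, t_i]$, $i = 1, \ldots, k$, on each of which $\eta$ is an integral curve of a single vector field $Y_i \in \mathscr{G}(\mathcal{V}_i)$ for a suitable open $\mathcal{V}_i \subseteq N$, and set $X_i := \Phi^{\#}(Y_i) \in \mathscr{F}(\Phi^{-1}(\mathcal{V}_i))$. The pointwise characterisation $T\Phi \circ X_i = Y_i \circ \Phi$ of trajectory-preserving morphisms then guarantees that $\Phi$ intertwines the flows of $X_i$ and $Y_i$ wherever both are defined.

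The key step is to locate a starting point $\tilde{x}_0 \in \Phi^{-1}(y_0)$ such that forward flow from $\tilde{x}_0$ along the concatenation $X_1, \ldots, X_k$ carries $\tilde{x}_0$ exactly to $x_1$ at time $T$. I would construct $\tilde{x}_0$ by backward integration from $x_1$: set $x^{(k)} := x_1$ and recursively define $x^{(i-1)} := \phi^{X_i}_{-(t_i - t_{i-1})}(x^{(i)})$ for $i = k, k-1, \ldots, 1$, finally taking $\tilde{x}_0 := x^{(0)}$.

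The main obstacle here is to verify that each backward flow is defined and that the chain indeed lands in $\Phi^{-1}(y_0)$, and this is precisely where the global-in-time hypothesis enters. Inductively assume $\Phi(x^{(i)}) = \eta(t_i)$. Since $\eta$ traces the forward $Y_i$-flow from $\eta(t_{i-1})$ to $\eta(t_i)$ over an interval of length $t_i - t_{i-1}$, the backward $Y_i$-flow from $\eta(t_i)$ for the same duration is defined and returns to $\eta(t_{i-1})$, so $-(t_i - t_{i-1}) \in J_{Y_i}(t_i, \eta(t_i))$. The global-in-time identity $J_{X_i}(t_i, x^{(i)}) = J_{Y_i}(t_i, \Phi(x^{(i)}))$ transfers this to $X_i$, making $x^{(i-1)}$ well defined, and flow intertwining yields $\Phi(x^{(i-1)}) = \phi^{Y_i}_{-(t_i - t_{i-1})}(\eta(t_i)) = \eta(t_{i-1})$. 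Thus $\tilde{x}_0 \in \Phi^{-1}(y_0)$, and the forward lift of $\eta$ starting at $\tilde{x}_0$ is a piecewise constant etale trajectory of $\mathfrak{G}$ terminating at $x_1$ by construction.

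Finally, since $\Phi^{-1}(y_0)$ is a reachability set for $\mathfrak{G}$, there exists a trajectory of $\mathfrak{G}$ from $x_0$ to $\tilde{x}_0$; concatenating it with the forward lift of $\eta$ from $\tilde{x}_0$ produces a trajectory of $\mathfrak{G}$ from $x_0$ to $x_1$, so $x_1 \in \mathcal{R}_{\mathfrak{G}}(x_0)$. Since $x_1 \in M$ was arbitrary, $\mathcal{R}_{\mathfrak{G}}(x_0) = M$.
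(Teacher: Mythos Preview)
Your argument is correct and mirrors the approach the paper defers to (Theorem~5 in \cite{Ref7}): backward-integrate the lifted piecewise constant vector fields from $x_1$, invoking the global-in-time hypothesis at each step to guarantee existence and to land at some $\tilde{x}_0\in\Phi^{-1}(y_0)$, then prepend a fiber trajectory from $x_0$ to $\tilde{x}_0$. The one notational slip---writing $-(t_i-t_{i-1})\in J_{Y_i}(t_i,\eta(t_i))$ where $t_{i-1}\in J_{Y_i}(t_i,\eta(t_i))$ is intended---does not affect the argument.
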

\begin{proof}
The proof is analogous to the proof for Theorem 5 in \cite{Ref7} so we omit it here.\qed
\end{proof}

\begin{theorem}\label{thm2}
Let $m \in \mathbb{Z}_{\geq 0}$ and $m{'} \in \{0, \mathrm{lip}\}$, let $\nu \in \{m+m{'}, \infty\}$,
and let $r=\infty$, as required. Let $\mathfrak{H}=(N, \mathscr{G})$ be a $C^{\nu}$-tautological control system that is
globally generated by a family of linearly independent vector fields. Let $M$ be a $C^{r}$ manifold and $\Phi: M \to N$ be a
proper submersion. Suppose that $y_0 \in \Phi(M)$, $\Phi(M) \subseteq \mathcal{R}_{\mathfrak{H}}(y_0, \mathcal{G}_{\mathfrak{H}, \mathrm{pwc}})$,
and $\Phi^{-1}(y_0)$ is connected. Then there exists a $C^{\nu}$-tautological control system $\mathfrak{G}=(M, \mathscr{F})$
such that there exists a trajectory-preserving morphism $(\Phi, \Phi^{\#})$ from $\mathfrak{G}$ to $\mathfrak{H}$. Besides, $\mathfrak{G}$
is reachable from every $x_0 \in \Phi^{-1}(y_0)$ by $\mathcal{G}_{\mathfrak{G}, \text{pwc}}$.
\end{theorem}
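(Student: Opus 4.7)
The strategy is to supplement the lifting construction from Theorem \ref{thm1} with a family of vertical vector fields that render the fiber $\Phi^{-1}(y_0)$ a reachability set, and then to invoke Proposition \ref{prop5}. The plan proceeds in four steps.

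First I would apply Theorem \ref{thm1} to obtain a family $\mathscr{X}_0 = \Phi^{\#}(\mathscr{G}(N))$ of $C^{\nu}$ vector fields on $M$ together with a trajectory-preserving morphism data. Because $\Phi$ is a proper submersion, Ehresmann's fibration theorem makes it a locally trivial $C^{r}$ fiber bundle whose typical fiber $F = \Phi^{-1}(y_0)$ is compact and connected. Fixing a local trivialisation $\Phi^{-1}(\mathcal{V}) \simeq \mathcal{V} \times F$ over a neighbourhood $\mathcal{V}$ of $y_0$, I would choose a finite symmetric family $W_1, \dots, W_l$ of smooth vector fields on $F$ whose piecewise constant concatenated flows (with nonnegative times) act transitively on $F$; such a family exists by a Chow--Rashevskii argument applied to a pointwise spanning set together with their negatives, exploiting compactness and connectedness of $F$. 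Each $W_j$ then lifts to a vertical vector field on $\mathcal{V} \times F$ and, via multiplication by a bump function on $N$ (available because $\nu \neq \omega$) pulled back by $\Phi$, extends to a globally defined vertical $C^{\nu}$ vector field $Z_j$ on $M$. I would then take $\mathscr{F}$ to be the globally generated presheaf with $\mathscr{F}(M) = \mathscr{X}_0 \cup \{Z_1, \dots, Z_l\}$. Since the image of $\Phi^{\#}$ lies in $\mathscr{F}$, the pair $(\Phi, \Phi^{\#})$ is a trajectory-preserving morphism from $\mathfrak{G} = (M, \mathscr{F})$ to $\mathfrak{H}$.

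Next I would establish globality in time via properness of $\Phi$. For $Y \in \mathscr{G}(\mathcal{V})$, $s \in \mathbb{R}$, and $x \in \Phi^{-1}(\mathcal{V})$, the inclusion $J_{\Phi^{\#}(Y)}(s, x) \subseteq J_{Y}(s, \Phi(x))$ is immediate from trajectory preservation. For the reverse inclusion, suppose the right endpoint $b$ of the former were strictly smaller than that of the latter. Choose $c$ between them; the maximal integral curve $\gamma$ of $Y$ sends $[s, c]$ to a compact subset of $\mathcal{V}$, and properness makes $\Phi^{-1}(\gamma([s, c]))$ a compact subset of $\Phi^{-1}(\mathcal{V})$. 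The lift $\xi$ of $\gamma$ through $x$ is trapped in this compact set for $t \in [s, b)$, providing a subsequential limit in $\Phi^{-1}(\mathcal{V})$ at $t = b$ from which $\xi$ extends past $b$ as an integral curve of $\Phi^{\#}(Y)$, contradicting maximality. The left endpoint argument is symmetric, giving $J_{\Phi^{\#}(Y)}(s, x) = J_{Y}(s, \Phi(x))$. Meanwhile, since each $Z_j$ is vertical its flow preserves $F$, and by construction piecewise constant concatenations of these flows act transitively on $F$, so $F$ is a reachability set for $\mathfrak{G}$ under $\mathcal{G}_{\mathfrak{G}, \text{pwc}}$. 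Combined with $\Phi(M) \subseteq \mathcal{R}_{\mathfrak{H}}(y_0, \mathcal{G}_{\mathfrak{H}, \text{pwc}})$ and the globality in time above, Proposition \ref{prop5} delivers $\mathcal{R}_{\mathfrak{G}}(x_0) = M$ for every $x_0 \in F$ with reaching trajectories piecewise constant in the etale sense, completing the conclusion.

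The main obstacle is the construction of the vertical family $\{Z_j\}$: these must be globally defined $C^{\nu}$ vector fields on $M$ whose restrictions to $F$ admit positive-time piecewise constant transitivity. The bump-function extension is precisely why the argument is restricted to $\nu \in \{m + m', \infty\}$, and the remark following Theorem \ref{thm1} already foreshadows the analogous difficulty in the real-analytic setting where partition-of-unity techniques are unavailable. A secondary subtlety is that the positive-time requirement in the definition of reachability forces one to include sign-reversed vector fields explicitly in the generating family, since the tautological framework does not automatically provide time-reversal.
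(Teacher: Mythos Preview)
Your argument is correct, and it follows the same high-level skeleton as the paper (enlarge the lifted family by vertical vector fields, show the fiber over $y_0$ is a reachability set, use properness for globality in time, then invoke Proposition~\ref{prop5}). But the way you build the vertical part is genuinely different from what the paper does.

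The paper does not construct any specific vertical vector fields at all. Instead it adjoins to $\mathscr{F}$ the entire \emph{sheaf} $\mathscr{H}$ of local sections of $\ker T\Phi$, setting $\overline{\mathscr{F}}(\mathcal{U})=\mathscr{F}(\mathcal{U})\cup\mathscr{H}(\mathcal{U})$. The fiber reachability step then becomes nearly trivial: any piecewise smooth path $\xi$ in the connected submanifold $\Phi^{-1}(y_0)$ has $\xi'(t)\in\ker T\Phi(\xi(t))$, so on each piece the tangent extends to a local section of $\mathscr{H}$, and $\xi$ is automatically a trajectory for a piecewise constant \'etale open-loop system. No Ehresmann, no Chow--Rashevskii, no bump functions. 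The price is that $\overline{\mathscr{F}}$ is \emph{not} globally generated; the paper is explicit about this and treats it as an acceptable trade-off, since the theorem does not demand global generation. Only afterwards does the paper produce a globally generated variant, by invoking the Drager--Lee--Park--Richardson theorem that every smooth distribution on a connected manifold is globally finitely generated, and then replacing $\mathscr{H}$ by the span of finitely many global generators of $\ker T\Phi$.

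Your route---Ehresmann trivialisation, a finite symmetric spanning family on the compact fiber, bump-function extension---is more hands-on and directly yields a globally generated system, so in a sense you are proving the theorem and the paper's subsequent refinement in one stroke, at the cost of a longer construction. The paper's sheaf argument, by contrast, is shorter and showcases precisely the point the author wants to make about the tautological framework: the presheaf need not be globally generated, and this freedom lets one sidestep the partition-of-unity and finite-generation machinery entirely for the core reachability step. Your direct proof of globality in time from properness matches what the paper outsources to Grasse's Corollary~1.
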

\begin{proof}
By Theorem \ref{thm1} we know there exists a globally generated $C^{\nu}$-tautological control system $\mathfrak{G}=(M, \mathscr{F})$ such that there exists a trajectory-preserving morphism from $\mathfrak{G}$ to $\mathfrak{H}$. Since $\Phi$ is a submersion,
the distribution $\ker T\Phi$ is a smooth distribution with constant rank. Consider the sheaf of sets of $C^{\nu}$-vector fields $\mathscr{H}$ such that for each open $\mathcal{U} \subseteq M$,
\begin{equation}\label{equ1}
\mathscr{H}(\mathcal{U})=\{X \in \Gamma^{\nu}(T\mathcal{U}) \ | \ X(x) \in \ker T\Phi(x), x \in \mathcal{U}\}.
\end{equation}
We can define a presheaf of sets of $C^{\nu}$-vector fields $\overline{\mathscr{F}}$ on $M$ such that
\begin{equation}\label{equ2}
\overline {\mathscr{F}}(\mathcal{U})=\{X \in \Gamma^{\nu}(T\mathcal{U})\ |\ X \in \mathscr{F}(\mathcal{U}) \ \text{or}\ X \in \mathscr{H}(\mathcal{U})\}.
\end{equation}
We claim that the tautological control system $\overline{\mathfrak{G}}(M, \overline{\mathscr{F}})$
satisfies the conditions stated in the theorem. Here the trajectory-preserving morphism can be chosen as the trajectory-preserving morphism from
$\mathfrak{G}$ to $\mathfrak{H}$ above. The remaining thing is to prove that $\overline{\mathfrak{G}}(M, \overline{\mathscr{F}})$
is reachable from every $x_0 \in \Phi^{-1}(y_0)$ by $\mathscr{O}_{\overline{\mathfrak{G}}, \mathrm{pwc}}$.

For $\Phi$ is a proper mapping, by Corollary 1 in \cite{Ref7}, we know the trajectory-preserving morphism from
$\overline{\mathfrak{G}}$ to $\mathfrak{H}$ is global in time. Then according to Proposition 5, it remains to show that the fiber $\Phi^{-1}(y_0)$ is a
reachability set for $\overline {\mathfrak{G}}$. Since $\Phi$ is a submersion, $\Phi^{-1}(y_0)$ is an embedded submanifold of $M$. By assumption it is connected,
so it is also path connected. If we choose arbitrary points $x_1, x_2 \in \Phi^{-1}(y_0)$,
then there exists a piecewise smooth path $\xi: [0,1] \rightarrow \Phi^{-1}(y_0)$ such that $\xi(0)=x_1, \xi(1)=x_2$.
Then $T_{\xi(t)}\Phi(\xi{'}(t))=0$ for each $t \in [0, 1]$. That is, $\xi{'}(t) \in \ker T\Phi(\xi(t))$, for each $t \in [0, 1]$.
According to the definition of $\mathscr{H}$, we know that $\xi(t)$ is a trajectory for $\overline{\mathfrak{G}}$
with respect to a piecewise constant etale open-loop system. That is, $x_2$ is reachable from $x_1$ for $\overline{\mathfrak{G}}$.
Since the points $x_1, x_2 \in \Phi^{-1}(y_0)$ are arbitrary, we have proved that the fiber $\Phi^{-1}(y_0)$ is a reachability set for $\overline {\mathfrak{G}}$
with respect to $\mathcal{G}_{\overline{\mathfrak{G}}, \text{pwc}}$.
Since for each point in $\Phi(M)$, it is also reachable from $y_0$ for $\mathfrak{H}$
with respect to a piecewise constant etale open-loop system, we conclude that $\overline {\mathfrak{G}}$
is reachable from every $x_0 \in \Phi^{-1}(y_0)$ by $\mathcal{G}_{\overline {\mathfrak{G}}, \text{pwc}}$.
\end{proof}
\remark Analogous result in \cite{Ref7} is presented in the context of affine control systems on Euclidean spaces,
where additional assumptions on the drift vector field are given to preserve the structure of affine control systems.
Here under the framework of tautological control systems, we show that the results hold for general manifolds,
not only Euclidean spaces. With respect to discussions on the correspondence between control systems and
tautological control systems in the following subsection, we can translate the above results into the language
of ordinary control systems. Unlike the situations in \cite{Ref7} where a previous result on the regularity properties of controls for global controllable $C^{1}$-control system is quoted in the proof, here we prove directly that $\mathfrak{G}$
is reachable from every $x_0 \in \Phi^{-1}(y_0)$ by piecewise constant etale open-loop subsystem. This is due to the use of sheaf language, where there are no choice of parameterisation by control (see (\ref{equ1})). And similar results can be stated for real analytic tautological control systems by Theorem 2.

The tautological control system $\mathfrak{G}=(M, \mathscr{F})$  given in the above theorem is not globally
generated because the set of $C^{\nu}$-vector fields $\mathscr{H}$ is a sheaf. However, we can  construct
a globally generated tautological control system that  satisfies the requirement of the above theorem as follows.
\begin{definition}
Let $M$ be a smooth manifold. A smooth generalized distribution is a subset $D \subseteq TM$ such that, for each $x_0 \in M$,
the subset $D_{x_0}=D\cap T_{x_0}M$  is a subspace. Besides, there exists a neighbourhood $\mathcal{N}$ of $x_0$ and a family
$(\xi_j)_{j \in J}$ of smooth vector fields, called local generators, on $\mathcal{N}$ such that
\[D_x=\text{span}_{\mathbb{R}}\{\xi_j(x)| j \in J\}\]
for each $x \in \mathcal{N}$. A smooth generalized distribution $D$ is globally finitely generated, if there exists a family
$(\xi_1, \cdots, \xi_k)$ of smooth vector fields on $M$ such that
\[D_x=\text{span}_{\mathbb{R}}\{\xi_1(x), \cdots, \xi_k(x)\}\]
for each $x \in M$.
\end{definition}

\begin{theorem}\cite{Ref12}\label{thm3}
Let $M$ be a connected manifold. Let $D$ be a smooth generalized distribution on $M$.
Then $D$ is globally finitely generated.
\end{theorem}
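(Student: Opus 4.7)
The plan is to combine a partition of unity construction with a generic position argument, in two stages: first producing countably many global generators, then reducing that family to a finite one of cardinality bounded by the dimension of $M$.

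By the definition of a smooth generalized distribution, each $x_0 \in M$ admits a neighbourhood $\mathcal{N}_{x_0}$ together with a family of smooth local generators. Since $M$ is second countable, I would extract a countable, locally finite refinement $\{\mathcal{N}_i\}_{i \in \mathbb{N}}$, and on each $\mathcal{N}_i$ retain at most $n = \dim M$ local generators $\xi_{i,1}, \ldots, \xi_{i,k_i}$ whose values span $D_x$ for every $x \in \mathcal{N}_i$. Taking a smooth partition of unity $\{g_i\}$ subordinate to $\{\mathcal{N}_i\}$, the products $g_i \xi_{i,j}$ extend by zero to global smooth vector fields on $M$. At any point $x$, only finitely many $g_i$ are nonzero on a neighbourhood of $x$, and for at least one index $i$ with $g_i(x) > 0$ the local generators $\xi_{i,j}$ span $D_x$. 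Re-indexing yields a countable family $(X_\alpha)_{\alpha \in \mathbb{N}}$ of globally defined smooth vector fields with $D_x = \mathrm{span}_{\mathbb{R}}\{X_\alpha(x) : \alpha \in \mathbb{N}\}$ for every $x \in M$. Connectedness of $M$ is invoked here only to ensure a single, uniform dimension bound $n$ across the construction.

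To descend from countably many to finitely many generators, I would form weighted sums $Y_\ell = \sum_{\alpha} c_{\ell \alpha} X_\alpha$ for $\ell = 1, \ldots, 2n+1$, with scalars $c_{\ell \alpha} \in \mathbb{R}$ chosen so that smoothness and local finiteness are automatic (the cut-offs built into the $X_\alpha$ already guarantee that only finitely many summands are nonzero near any given point). Since $\dim D_x \leq n$ for every $x$, a generic $(2n+1) \times \infty$ real coefficient array should produce a family $(Y_1(x), \ldots, Y_{2n+1}(x))$ that spans $D_x$ at every $x \in M$, so that $D$ is globally finitely generated by these $2n+1$ vector fields.

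The main obstacle is to give the word \emph{generic} a precise meaning. The approach I would take is to equip the space of admissible coefficient arrays with a suitable complete metric topology, define the \emph{bad} set as those arrays for which the span fails at some $x \in M$, and show that this bad set is a countable union of nowhere dense subsets; a Baire category argument then furnishes an admissible array. The bound $2n+1$ enters through a Sard-type transversality statement for the finite-dimensional fibres $D_x \subseteq T_xM$, allowing one to avoid the locus where fewer than $\dim D_x$ of the $Y_\ell(x)$ are linearly independent. Once such coefficients are obtained, smoothness of the $Y_\ell$ and the identity $D_x = \mathrm{span}_{\mathbb{R}}\{Y_1(x), \ldots, Y_{2n+1}(x)\}$ are immediate from the construction.
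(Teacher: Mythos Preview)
The paper gives no proof of this statement: it is quoted from \cite{Ref12} and used as a black box in the construction of $\widetilde{\mathfrak{G}}$ that follows. There is therefore nothing in the paper itself to compare your argument against.

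For comparison with the source, the proof in \cite{Ref12} does not use genericity. Drager, Lee, Park and Richardson argue via topological dimension: an $n$-manifold admits a cover by coordinate charts that refines into $n+1$ subfamilies, each consisting of pairwise disjoint open sets (a form of Ostrand's theorem). On each chart the distribution is generated by at most $n$ smooth vector fields, and because the charts within one subfamily are disjoint, those local generators combine (after multiplication by bump functions) into $n$ global vector fields per subfamily, giving at most $n(n+1)$ generators overall. Your two-stage plan---partition of unity to obtain countably many global generators, then a generic linear combination to reduce to $2n+1$---is a different but legitimate route, and the informal codimension count behind the bound $2n+1$ is correct.

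Two places in your outline would need to be tightened. First, the assertion in Stage~1 that one may ``retain at most $n$'' local generators on each $\mathcal{N}_i$ does not follow directly from the definition: the index set $J$ may be infinite and which finite subset spans $D_x$ can depend on $x$. This local finite generation is itself a lemma in \cite{Ref12}; for your purposes it is not actually needed, since countably many global generators already suffice for Stage~2. Second, the Baire argument on an infinite coefficient array is only a sketch. The clean way to make it rigorous is to exhaust $M$ by compacta $K_j$, observe that on each $K_j$ only finitely many $X_\alpha$ are supported, run a finite-dimensional Sard/transversality argument to show the bad coefficient set for $K_j$ has measure zero, and then take the countable intersection of the good sets (for instance under a product probability measure on the full coefficient space).
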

Since $\ker T\Phi$ is a smooth distribution on $M$, then according to Theorem \ref{thm3}, there exist smooth vector fields
$X_1, \cdots, X_k$ on $M$ such that
\begin{equation}\label{equ5}
 \ker T\Phi(x)=\text{span}_{\mathbb{R}}\{X_{1}(x), \cdots, X_{k}(x)\}.
\end{equation}
Consider the globally generated presheaf $\mathscr{H}$ such that for each open $\mathcal{U} \subseteq M$,
\[\widetilde{\mathscr{H}}(\mathcal{U})=\{\sum_{i=1}^{k} c_{i}X_{i} | {\mathcal{U}},  c_{i} \in \mathbb{R}, i=1, \cdots, k\}.\]
Then we define a presheaf of sets of $C^{\nu}$-vector fields $\widetilde{\mathscr{F}}$ on $M$ such that
\[\widetilde {\mathscr{F}}(\mathcal{U})=\{X \in \Gamma^{\nu}(T\mathcal{U}) \ | \ X \in \mathscr{F}(\mathcal{U})\  \text{or} \ X \in \widetilde{\mathscr{H}}(\mathcal{U})\}.\]
$\widetilde{\mathscr{F}}$ is globally generated. Given arbitrary points $x_1, x_2 \in \Phi^{-1}(y_0)$, then for any piecewise smooth path $\xi: [0,1] \rightarrow \Phi^{-1}(y_0)$ such that $\xi(0)=x_1, \xi(1)=x_2$, locally in a subinterval $\mathbb{T}^{'}$ there exist smooth functions $\alpha_{1}, \cdots, \alpha_{k}$ on open $\mathcal{U} \subseteq M$ such that $\xi^{'}(t)=\sum_{i=1}^{k} \alpha_{i}(\xi(t))X_{i}(\xi(t))$. Let the mapping $\mathscr{X}: \mathcal{W} \rightarrow
\mathrm{Et(\mathscr{G}^{\nu}_{TM})}$ be $\mathscr{X}(t,x)= [\sum_{i=1}^{k} \alpha_{i}(\xi(t))X_{i}]_{x}$ for each
$(t,x) \in \mathbb{T}^{'} \times \mathcal{U}$. It satisfies the conditions (i) (ii) (iii) for the mapping $\mathscr{X}$
above. So $\xi(t)$ is a trajectory for $\widetilde{\mathfrak{G}}=(M, \widetilde{\mathscr{F}})$.
Then for the globally generated tautological control system
$\widetilde{\mathfrak{G}}=(M, \widetilde{\mathscr{F}})$, for every $x_0 \in \Phi^{-1}(y_0)$ we have
$\mathcal{R}_{\widetilde{\mathfrak{G}}}(x_0)=M.$

\remark For a real analytic generalized distribution on $M$, we can not conclude that it is globally finitely generated.
However, with the aid of Cartan's Theorem A (see \cite{Ref13}), we know that for each point $x_{0} \in M$, there exist
a neighbourhood $\mathcal{U}$ of $x_{0}$ and  real analytic global sections $X_{1}, \cdots, X_{k}$ such that
the generalized distribution is spanned by these global sections on $\mathcal{U}$. In the case of real analyticity,
$\ker T\Phi$ is a real analytic distribution on $M$. We can apply the above approach for constructing $\widetilde{\mathfrak{G}}$ to the real analytic systems.
The only difference is that for real analytic systems,
there may not be finitely many global generators for the distribution.

For illustration of Theorem 3 we consider the following examples.

Recall that a fibred manifold is a triple $(E, \pi, M)$ where $E$ and $M$ are manifolds and $\pi: E \rightarrow M$
is a surjective submersion. A local trivialization of $\pi$ around $p \in M$ is a triple $(W_p, F_p, t_p)$ where
$W_p$ is a neighbourhood of $p$, $F_p$ is a manifold and $t_p: \pi^{-1}(W_p) \rightarrow W_p \times F_p$ is a diffeomorphism
satisfying the condition
\[pr_1 \circ t_p=\pi|_{\pi^{-1}(W_p)}.\]
A fibred manifold which has at least one local trivialization around each point of its base space is called locally
trivial and is know as a bundle.
\begin{example}
Let $(E, \pi, M)$ be a bundle with $F_p$ being a compact connected manifold (e.g.principal $SO(n)$-bundle over an orientable
$n$-dimensional Riemannian manifold $M$).
Let $\mathfrak{H}=(M, \mathscr{G})$ be a $C^{\infty}$-tautological control system that is
globally generated by a family of linearly independent vector fields.
Suppose that $\mathfrak{H}$ is reachable from $x_0 \in M$. Then there exists a $C^{\infty}$-tautological control system $\mathfrak{G}=(E, \mathscr{F})$
such that there exists a trajectory-preserving morphism $(\Phi, \Phi^{\#})$ from $\mathfrak{G}$ to $\mathfrak{H}$. Besides, $\mathfrak{G}$
is reachable from every $p_0 \in \pi^{-1}(x_0)$.
\end{example}
\begin{example}
Consider the Mobius band $M$, whose total space may be constructed from the topological space $[0,1] \times (0,1)$
by identifying the points $(0,y)$ and $(1,1-y)$, Giving the quotient space the structure of a 2-dimensional smooth manifold,
the image of the set of points $[0,1] \times \{\frac {1}{2}\}$ under the quotient map is then diffeomorphic
to the circle $S^{1}$, and the projection $[0,1] \times (0,1) \rightarrow [0,1] \times \{\frac{1}{2}\}$
passing to the quotient is a surjective submersion from the Mobius band to the circle.
Let $\mathscr{G}$ be globally generated by $e^{i\theta}u$, $\ u \geq 0$. Then the tautological control system
$\mathfrak{H}=(S^{1}, \mathscr{G})$ is reachable from every point in $S^{1}$. The vector field
$\theta\frac{\partial}{\partial x}$ and the smooth distribution
$\text{span}_{\mathbb{R}}\{\frac{\partial}{\partial y}\}$ on $\mathbb{R}^{2}$ are projectable by the quotient map.
Then we get globally defined complete vector field
$f$ and smooth 1-dimensional distribution $D$ on the Mobius band. Let the presheaf
$\mathscr{F}$ be
defined by $\mathscr{F}(U)=\{X \in \Gamma^{\infty}(TU)\ | \ X=cf|U, \ c \geq 0\ or \ X(x) \in D(x), \forall  x \in U\}$, where
$U$ is an open set of the Mobius band. Then from Theorem 3 we know that there exists a trajectory-preserving morphism from
$\mathfrak{G}=(M, \mathscr{F})$ to $\mathfrak{H}$. And the tautological control system $\mathfrak{G}$
is reachable from every point of the Mobius band.

\end{example}

In the remaining part of this section, we will discuss how the above reachability results can be extended to controllability
results.

We say that a tautological control system $\mathfrak{G}$ is small-time locally controllable from $x_0$ if
there exists $T \ge 0$ such that $x_0 \in \text{int}(R_{\mathfrak{G}}(x_0, \leq t))$ for each $t \in (0,T]$.
\begin{proposition}
Let $m \in \mathbb{Z}_{\geq 0}$ and $m{'} \in \{0, \mathrm{lip}\}$, let $\nu \in \{m+m{'}, \infty\}$,
and let $r=\infty$, as required. Let $\mathfrak{H}=(N, \mathscr{G})$ be a $C^{\nu}$-tautological control system that is
globally generated by a family of linearly independent vector fields. Let $M$ be a $C^{r}$ manifold and $\Phi: M \to N$ be a
proper submersion. Suppose that $y_0 \in \Phi(M)$, and $\mathfrak{H}$ is small-time locally controllable from $y_0$,
and $\Phi^{-1}(y_0)$ is connected. Then there exists a $C^{\nu}$-tautological control system $\mathfrak{G}=(M, \mathscr{F})$
such that there exists a trajectory-preserving morphism $(\Phi, \Phi^{\#})$ from $\mathfrak{G}$ to $\mathfrak{H}$. Besides, $\mathfrak{G}$
is small-time locally controllable  from every $x_0 \in \Phi^{-1}(y_0)$.
\end{proposition}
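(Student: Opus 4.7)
The plan is to reuse the tautological control system $\overline{\mathfrak{G}}=(M, \overline{\mathscr{F}})$ constructed in the proof of Theorem \ref{thm2}, namely the presheaf $\overline{\mathscr{F}}$ of (\ref{equ2}) obtained from the lifted family $\mathscr{F}$ (produced via the partition-of-unity construction of Theorem \ref{thm1}) together with the fiber-tangent sheaf $\mathscr{H}$ of (\ref{equ1}). Properness of the submersion $\Phi$ together with Corollary 1 of \cite{Ref7} guarantees that the trajectory-preserving morphism $(\Phi, \Phi^{\#})$ from $\overline{\mathfrak{G}}$ to $\mathfrak{H}$ is global in time, so every trajectory of $\mathfrak{H}$ lifts uniquely to a trajectory of $\overline{\mathfrak{G}}$ with matching time-domain.

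To establish small-time local controllability of $\overline{\mathfrak{G}}$ from $x_0 \in \Phi^{-1}(y_0)$, let $T>0$ witness the STLC of $\mathfrak{H}$ from $y_0$. Fix $t \in (0, T]$ and write $t = t_1 + t_2$ with $t_1, t_2 > 0$. By STLC of $\mathfrak{H}$, the set $R_{\mathfrak{H}}(y_0, \leq t_1)$ contains an open neighbourhood $V$ of $y_0$ in $N$. For each $y \in V$, select a trajectory of $\mathfrak{H}$ from $y_0$ to $y$ of length $\tau(y) \leq t_1$ and lift it to a trajectory of $\overline{\mathfrak{G}}$ starting at $x_0$ by integrating the corresponding lifted time-varying vector field $s \mapsto \Phi^{\#}(Y_{s})$; the endpoint $\tilde{x}(y) \in \Phi^{-1}(y)$ lies in $R_{\overline{\mathfrak{G}}}(x_0, \leq t_1)$ and, by continuous dependence of ODE solutions on initial conditions, stays close to $x_0$ whenever $y$ is close to $y_0$.

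Now choose an adapted chart $(u, v) \colon U \to \mathbb{R}^{k} \times \mathbb{R}^{n-k}$ around $x_0$ in which $\Phi$ is the projection $(u, v) \mapsto u$ and $x_0 \leftrightarrow (0,0)$, so that the constant coordinate vector fields $\partial/\partial v_{i}$ belong to $\mathscr{H}(U) \subseteq \overline{\mathscr{F}}(U)$. By rescaling their coefficients, any translation of the $v$-component by a vector $\Delta$ of sufficiently small norm is realised by a trajectory of $\overline{\mathfrak{G}}$ in exact time $t_2$. Concatenating a horizontal lift reaching $\tilde{x}(y)$ with a subsequent vertical translation produces points of the form $(y, v(\tilde{x}(y)) + \Delta)$, so $R_{\overline{\mathfrak{G}}}(x_0, \leq t)$ contains all such points for $y \in V$ close to $y_0$ and $\Delta$ of small norm; since $v(\tilde{x}(y_0)) = 0$, the map $(y, \Delta) \mapsto (y, v(\tilde{x}(y)) + \Delta)$ has image containing an open neighbourhood of $(0,0) = x_0$, which establishes STLC of $\overline{\mathfrak{G}}$ from $x_0$.

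The main obstacle is controlling the vertical component $v(\tilde{x}(y))$ of the lifted endpoint uniformly in $y$, because the lift depends on the selected trajectory in $\mathfrak{H}$ and need not be a continuous section of $\Phi$. This is handled by the fact that the vertical translation parameter $\Delta$ is a free independent parameter that can absorb the deviation $v(\tilde{x}(y))$, which itself tends to zero with $\tau(y)$ thanks to the global-in-time property and the uniform bound on the lifted vector field on a compact neighbourhood of $x_0$; hence the combined horizontal-plus-vertical reachable set is genuinely open near $x_0$ for every admissible splitting of $t$.
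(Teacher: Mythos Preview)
Your construction of $\overline{\mathfrak{G}}$ and the appeal to properness of $\Phi$ (via Corollary~1 of \cite{Ref7}) for the global-in-time property match the paper exactly. From that point on, however, your route to small-time local controllability diverges from the paper's.

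The paper argues as follows: given a target point $x$ in a neighbourhood $V$ of $x_0$, it takes a trajectory of $\mathfrak{H}$ from $y_0$ to $\Phi(x)$ of duration at most $t$ and lifts it \emph{backward} through $\Phi^{\#}$ with terminal condition $x$, producing a trajectory of $\overline{\mathfrak{G}}$ from some $\bar{x}\in\Phi^{-1}(y_0)$ to $x$. Then connectedness of the single fibre $\Phi^{-1}(y_0)$ together with the sheaf $\mathscr{H}$ lets one join $x_0$ to $\bar{x}$ in arbitrary time $\epsilon>0$. Thus every $x\in V$ lies in $R_{\overline{\mathfrak{G}}}(x_0,\leq t+\epsilon)$, with no need whatsoever to locate or bound the auxiliary point $\bar{x}$.

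You lift \emph{forward} from $x_0$ instead, landing at an uncontrolled point $\tilde{x}(y)\in\Phi^{-1}(y)$, and then repair the fibre coordinate by a vertical translation in an adapted chart. For the resulting reachable set to be open you must keep $\tilde{x}(y)$ inside that chart, and this is where your argument is incomplete. The phrase ``continuous dependence on initial conditions'' is a misnomer (the initial condition is always $x_0$); what you actually need is a bound on the displacement of the lifted flow that is \emph{uniform over all admissible open-loop systems}, so that $d(\tilde{x}(y),x_0)\leq C\tau(y)$. Such a uniform bound is not automatic for $\mathrm{LI}\Gamma^{\nu}$ time-varying vector fields; it holds in the present setting only because pointwise linear independence forces the generating family of $\mathscr{G}$ to contain at most $\dim N$ vector fields, so the lifted family $\{\Phi^{\#}(Y_j)\}$ is finite and hence uniformly bounded on compacta. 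You never invoke this finiteness, and without it the step fails. The paper's backward-lift argument avoids the whole issue: the lift hits the target $x$ exactly, and all slack is absorbed along the one fibre $\Phi^{-1}(y_0)$ whose connectedness is explicitly assumed.
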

\begin{proof}
Consider the tautological control system $\overline{\mathfrak{G}}(M, \overline{\mathscr{F}})$ as given in equation (\ref{equ2}).
We claim that $\overline{\mathfrak{G}}$ is small-time locally controllable from  every $x_0 \in \Phi^{-1}(y_0)$.

Let $x_0 \in \Phi^{-1}(y_0)$. Since $\mathfrak{H}$ is small-time locally controllable from $y_0$,
we know that there exists $T > 0$ such that $y_0 \in \text{int}(R_{\overline{\mathfrak{G}}}(y_0, \leq t))$ for each $t \in (0,T]$.
Then given $t \in (0,T]$, there exists an open set $U$ such that $y_0 \in U \subseteq \text{int}(R_{\overline{\mathfrak{G}}}(y_0, \leq t))$.
Since $\Phi$ is a submersion, there exists a local neighbourhood $V$ of $x_0$ such that $\Phi(V) \subseteq U$. For any $x \in V$,
we have $\Phi(x) \in U$. Then $\Phi(x)$ is reachable from $y_0$ in time at most $t$. By the mapping $\Phi^{\#}$, we know
that $x$ is reachable from some point $\bar x \in \Phi^{-1}(y_0)$ in time at most $t$. On the other hand, since $\Phi^{-1}(y_0)$ is connected,
then for arbitrary $\epsilon > 0$, there exists a piecewise smooth path $\xi: [0,\epsilon] \rightarrow \Phi^{-1}(y_0)$ such that $\xi(0)=x_0, \xi(\epsilon)=\bar x$,
where $\xi(t)$ is a trajectory for $\overline{\mathfrak{G}}$ with respect to a piecewise constant etale open-loop system. So $x$ is reachable from $x_0$ in time at most $t+\epsilon$. That is,
$x_0 \in \text{int}(R_{\overline{\mathfrak{G}}}(x_0, \leq t+\epsilon))$. So $\overline{\mathfrak{G}}$ is small-time
locally controllable from  $x_0$  due to the arbitrarity of $t$ and $\epsilon$.
\end{proof}
\subsection{Correspondence between control systems and tautological control systems}
\label{sec:5}
In this section, we will investigate the relations between the notion of lifting and morphisms of tautological control systems. First we recall the notion of lifting control systems.

Let $m \in \mathbb{Z}_{\geq 0}$ and $m{'} \in \{0, \mathrm{lip}\}$, let $\nu \in \{m+m{'}, \infty\}$, and let $r=\infty$, as required.
Recall the used notations in Definition 6, then we have
\begin{definition}\cite{Ref7}
 Let $\Sigma_1=(M_1, F_1, \mathfrak{C_1})$ and $\Sigma_2=(M_2, F_2, \mathfrak{C_2})$ be two $C^{\nu}$-control systems and let $\Phi: M_1 \rightarrow M_2$  be a $C^{r}$ mapping. We say that $\Sigma_2$ is liftable to $\Sigma_1$ if there is a mapping $l: \mathfrak{C_2} \rightarrow \mathfrak{C_1}$ such that for every $u \in  \mathfrak{C_2}$ the $C^{\nu}$-vector fields $F_1^{l(u)}$ and $F_2^{u}$ are $\Phi$-related.
\end{definition}
Given two $C^{\nu}$-control systems \[\Sigma_1=(M_1, F_1, \mathfrak{C_1}), \Sigma_2=(M_2, F_2, \mathfrak{C_2}),\]
we associate the $C^{\nu}$-tautological control systems
\[\mathfrak{G}_{\Sigma_1}=(M_1, \mathscr{F}_{\Sigma_1}), \mathfrak{G}_{\Sigma_2}=(M_2, \mathscr{F}_{\Sigma_2}),\]
as introduced in Section \ref{sec:1}.
\begin{proposition}\label{prop6}
Suppose the $C^{\nu}$-vector fields  $F_{2}^{u}, u \in \mathfrak{C_2}$ are linearly independent, then the $C^{\nu}$-control system $\Sigma_2$ is liftable to $\Sigma_1$  if and only if there exists a trajectory-preserving morphism from the $C^{\nu}$-tautological control system $\mathfrak{G}_{\Sigma_1}$ to $\mathfrak{G}_{\Sigma_2}$.
\end{proposition}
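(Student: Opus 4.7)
The plan is to prove each direction separately by translating between the data of a lift $l$ and the data of a morphism $(\Phi, \Phi^{\#})$, using the fact that $\mathscr{F}_{\Sigma_{i}}$ is globally generated by the vector fields $\{F_{i}^{u} \mid u \in \mathfrak{C}_{i}\}$. The key bridge throughout is Proposition 1, which says that trajectory-preservation is equivalent to the pointwise $\Phi$-relatedness identity $T_{x}\Phi(\Phi^{\#}(Y)(x)) = Y(\Phi(x))$, so in both directions trajectory-preservation and $\Phi$-relatedness are the same assertion about the generators.

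For the forward implication, suppose $l: \mathfrak{C}_{2} \to \mathfrak{C}_{1}$ is a lift. Linear independence of $\{F_{2}^{u}\}_{u \in \mathfrak{C}_{2}}$ in particular forces $u \mapsto F_{2}^{u}$ to be injective, so for each open $\mathcal{V} \subseteq M_{2}$ the prescription
\[
\Phi^{\#}_{\mathcal{V}}(F_{2}^{u}|\mathcal{V}) := F_{1}^{l(u)}|\Phi^{-1}(\mathcal{V})
\]
unambiguously defines a map $\mathscr{G}(\mathcal{V}) \to \Phi_{*}\mathscr{F}(\mathcal{V})$. To meet clause (a) of Definition \ref{Def3}, I would first extend this prescription linearly to $\mathrm{span}\{F_{2}^{u}|\mathcal{V}\} \subseteq \Gamma^{\nu}(T\mathcal{V})$, which is well-defined by linear independence, and then extend to a continuous linear operator $L_{\mathcal{V}}$ on all of $\Gamma^{\nu}(T\mathcal{V})$ by choosing a continuous linear complement and declaring $L_{\mathcal{V}}$ to vanish there. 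Restriction compatibility of the $L_{\mathcal{V}}$'s is inherited from the restriction compatibility of the generators $F_{2}^{u}|\mathcal{V}$ themselves. The trajectory-preserving condition then follows from Proposition 1 together with the $\Phi$-relatedness assumption $T\Phi \circ F_{1}^{l(u)} = F_{2}^{u} \circ \Phi$.

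For the reverse implication, suppose $(\Phi, \Phi^{\#})$ is a trajectory-preserving morphism from $\mathfrak{G}_{\Sigma_{1}}$ to $\mathfrak{G}_{\Sigma_{2}}$. For each $u \in \mathfrak{C}_{2}$ we have $F_{2}^{u} \in \mathscr{G}(M_{2}) = \mathscr{F}_{\Sigma_{2}}(M_{2})$, hence $\Phi^{\#}_{M_{2}}(F_{2}^{u}) \in \Phi_{*}\mathscr{F}(M_{2}) = \mathscr{F}_{\Sigma_{1}}(M_{1}) = \{F_{1}^{v} \mid v \in \mathfrak{C}_{1}\}$. Using the axiom of choice, pick $l(u) \in \mathfrak{C}_{1}$ with $F_{1}^{l(u)} = \Phi^{\#}_{M_{2}}(F_{2}^{u})$. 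Applying Proposition 1 immediately yields $T_{x}\Phi(F_{1}^{l(u)}(x)) = F_{2}^{u}(\Phi(x))$ for every $x \in M_{1}$, which is exactly the $\Phi$-relatedness required for $\Sigma_{2}$ to be liftable to $\Sigma_{1}$.

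The main obstacle I anticipate is the continuous-linear-extension clause (a) of Definition \ref{Def3} in the forward direction: although the pointwise lifting $F_{2}^{u} \mapsto F_{1}^{l(u)}$ is straightforward, producing a coherent family $L_{\mathcal{V}}$ of continuous linear maps on the full section spaces $\Gamma^{\nu}(T\mathcal{V})$, compatible under restriction to all open subsets $\mathcal{V}' \subseteq \mathcal{V}$, requires a topological complement argument that depends on regularity (for $\nu \in \{m+m', \infty, \omega\}$) of the function spaces involved. This is the step where the linear independence hypothesis is genuinely used; without it, the assignment on the generators is not even well-defined, let alone extendable, and the equivalence could fail.
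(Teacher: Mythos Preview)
Your proposal is correct and follows essentially the same route as the paper's proof: both directions are argued by translating between the lift map $l$ and the assignment $\Phi^{\#}_{\mathcal{V}}(F_{2}^{u}|\mathcal{V}) = F_{1}^{l(u)}|\Phi^{-1}(\mathcal{V})$, with linear independence used to ensure well-definedness and Proposition~1 supplying the equivalence between trajectory-preservation and $\Phi$-relatedness. The only difference is that you are more explicit about how to produce the continuous linear extension $L_{\mathcal{V}}$ required by Definition~\ref{Def3}(ii)(a), sketching a span-plus-complement construction, whereas the paper simply asserts that condition (ii) is met because the $F_{2}^{u}$ are linearly independent; your caveat about the delicacy of this step is well placed, though note that in this subsection the paper restricts to $\nu \in \{m+m', \infty\}$, so the real-analytic case you worry about does not arise here.
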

\begin{proof}
To prove the "only if" part, suppose that $\Sigma_2$ is liftable to $\Sigma_1$ via $\Phi: M_1 \rightarrow M_2$, then there exists a mapping $l: \mathfrak{C_2} \rightarrow \mathfrak{C_1}$ such that
\begin{equation}\label{equ3}
T_{x}\Phi(F_1(x, l(u)))=F_2(\Phi(x), u),
\end{equation}
for each $x \in M_1, u\in \mathfrak{C_2}$.  For each open $\mathcal{V} \subseteq M_2$, let $\Phi^{\#}_{\mathcal{V}}:
\mathscr{F}_{\Sigma_2}(\mathcal{V}) \rightarrow \Phi_{*}\mathscr{F}_{\Sigma_1}(\mathcal{V})$ be defined as
\[\Phi^{\#}_{\mathcal{V}}(F_2^{u}|\mathcal{V})=F_1^{l(u)}|\Phi^{-1}(\mathcal{V}).\]
Since the $C^{\nu}$-vector fields  $F_{2}^{u}, u \in \mathfrak{C_2}$ are linearly independent, then the mapping $u \rightarrow F_2^{u}|\mathcal{V}$ from $\mathfrak{C_2}$ to $\Gamma^{\nu}(T\mathcal{V})$ is injective. So $\Phi^{\#}_{\mathcal{V}}$ is well defined.  $(\Phi, \Phi^{\#})$ satisfies condition (ii) in Definition 2 becuase $F_{2}^{u}, u \in \mathfrak{C_2}$ are linearly independent. This means that $(\Phi, \Phi^{\#})$ is a morphism from $\mathfrak{G}_{\Sigma_1}$ to $\mathfrak{G}_{\Sigma_2}$. It is also a trajectory-preserving morphism because of (\ref{equ3}).
To prove the "if" part, let $(\Phi, \Phi^{\#})$ be trajectory-preserving morphism from $\mathfrak{G}_{\Sigma_1}$ to $\mathfrak{G}_{\Sigma_2}$.
Then for every $u \in \mathfrak{C_2}$, there exists  ${\bar u} \in \mathfrak{C_1}$, such that
\begin{equation}
T_x\Phi(F_1^{\bar u})=F_2^{u}(\Phi(x)),
\end{equation}
for each $x \in M_1$.
Let $l: \mathfrak{C_2} \rightarrow \mathfrak{C_1}$  be $l(u)=\bar u$. Then according to equation (5), we know that
$\Sigma_2$ is liftable to $\Sigma_1$ with lifting function $l: \mathfrak{C_2} \rightarrow \mathfrak{C_1}$. This completes
the proof.
\end{proof}
\remark Let $\Sigma_1$ and $\Sigma_2$ be two control-affine systems with $\Sigma_i=(M_i, F_i, \mathbb{R}^{m_i})$.
Suppose that  $F_2(y,u)=f_0(y)+\sum_{a=1}^{m_2}u^{a}f_a(y)$, where $f_0, f_1, \cdots, f_k$ are linear independent.
Then the $C^{\nu}$-control system $\Sigma_2$ is liftable to $\Sigma_1$ via an affine mapping $l: \mathbb{R}^{m_2} \rightarrow \mathbb{R}^{m_1}$,
if and only if there exists a trajectory-preserving morphism from the $C^{\nu}$-tautological control system $\mathfrak{G}_{\Sigma_1}$ to $\mathfrak{G}_{\Sigma_2}$.

Given two globally generated $C^{\nu}$-tautological control systems \[\mathfrak{G}_1=(M_1, \mathscr{F}_1), \mathfrak{G}_2=(M_2, \mathscr{F}_2),\] we can associate the $C^{\nu}$-control systems
\[\Sigma_{\mathfrak{G}_1}=(M_1, F_{\mathscr{F}_1}, \mathfrak{C}_{\mathscr{F}_1}), \Sigma_{\mathfrak{G}_2}=(M_2, F_{\mathscr{F}_2}, \mathfrak{C}_{\mathscr{F}_2})\] as introduced in Section \ref{sec:1}.
\begin{proposition}
Suppose that $\mathfrak{G}_2$  is globally generated by a family of linearly independent vector fields. There exists a trajectory-preserving morphism from $\mathfrak{G}_1$ to $\mathfrak{G}_2$  if and only if the control system $\Sigma_{\mathfrak{G}_2}$ is liftable to $\Sigma_{\mathfrak{G}_1}$.
\end{proposition}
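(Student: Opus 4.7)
The statement is the dual of Proposition \ref{prop6}: starting from two globally generated tautological control systems rather than from two ordinary control systems. My plan is therefore to exploit the correspondence $\mathfrak{G}_{\Sigma_{\mathfrak{G}}}=\mathfrak{G}$ (valid for globally generated systems, by the proposition relating $\mathfrak{G}_{\Sigma}$ and $\Sigma_{\mathfrak{G}}$) and use Proposition \ref{prop6} as a bridge, checking only that its hypotheses are fulfilled in the present setting.

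For the ``only if'' direction, suppose $(\Phi,\Phi^{\#})$ is a trajectory-preserving morphism from $\mathfrak{G}_1$ to $\mathfrak{G}_2$. Since $\mathfrak{G}_2$ is globally generated, the control set of $\Sigma_{\mathfrak{G}_2}$ is $\mathfrak{C}_{\mathscr{F}_2}=\mathscr{F}_2(M_2)$, and similarly $\mathfrak{C}_{\mathscr{F}_1}=\mathscr{F}_1(M_1)$. For each $Y\in\mathscr{F}_2(M_2)$, the morphism provides $\Phi^{\#}_{M_2}(Y)\in\Phi_{*}\mathscr{F}_1(M_2)=\mathscr{F}_1(M_1)$, and I would simply set $l(Y)=\Phi^{\#}_{M_2}(Y)$. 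The trajectory-preserving characterisation $T_x\Phi(\Phi^{\#}(Y)(x))=Y(\Phi(x))$ translates verbatim into the $\Phi$-relatedness of $F_{\mathscr{F}_1}^{l(Y)}$ and $F_{\mathscr{F}_2}^{Y}$, which is exactly the definition of liftability.

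For the ``if'' direction, I plan to invoke Proposition \ref{prop6} applied to the pair $\Sigma_{\mathfrak{G}_1}$, $\Sigma_{\mathfrak{G}_2}$. Its hypothesis requires that the vector fields $F_{\mathscr{F}_2}^{u}$ (for $u\in\mathfrak{C}_{\mathscr{F}_2}=\mathscr{F}_2(M_2)$) be linearly independent; but by construction $F_{\mathscr{F}_2}^{X}(x)=X(x)$, so linear independence of the generators of $\mathscr{F}_2$ transfers immediately. Proposition \ref{prop6} then yields a trajectory-preserving morphism from $\mathfrak{G}_{\Sigma_{\mathfrak{G}_1}}$ to $\mathfrak{G}_{\Sigma_{\mathfrak{G}_2}}$, and the identification $\mathfrak{G}_{\Sigma_{\mathfrak{G}_i}}=\mathfrak{G}_i$ (valid because both $\mathfrak{G}_1$ and $\mathfrak{G}_2$ are globally generated) converts this into the desired morphism from $\mathfrak{G}_1$ to $\mathfrak{G}_2$.

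The step I expect to require the most care is the ``only if'' verification that the collection $(\Phi^{\#}_{\mathcal{V}})_{\mathcal{V}\text{ open}}$ I am extracting really is a morphism in the sense of Definition \ref{Def3}, not merely a single global assignment on $\mathscr{F}_2(M_2)$. Since $\mathfrak{G}_2$ is globally generated, every $Y'\in\mathscr{G}(\mathcal{V})$ is of the form $Y|\mathcal{V}$ for some $Y\in\mathscr{F}_2(M_2)$, so one defines $\Phi^{\#}_{\mathcal{V}}(Y|\mathcal{V})=\Phi^{\#}_{M_2}(Y)|\Phi^{-1}(\mathcal{V})$; the linear-independence hypothesis is precisely what makes this assignment well-defined and consistent under restriction, and it supplies the continuous linear extension $L_{\mathcal{V}}$ of Definition \ref{Def3}(ii)(a). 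Once this bookkeeping is done, both directions are straightforward reformulations of $\Phi$-relatedness.
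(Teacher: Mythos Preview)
Your approach is correct and essentially the same as the paper's: verify that $F_{\mathscr{F}_2}^{X}=X$ inherits the linear-independence hypothesis, invoke Proposition \ref{prop6}, and transport the conclusion via the identification $\mathfrak{G}_{\Sigma_{\mathfrak{G}_i}}=\mathfrak{G}_i$ from Proposition 3. Your final paragraph is misplaced, however: in the ``only if'' direction you are \emph{given} the full family $(\Phi^{\#}_{\mathcal{V}})_{\mathcal{V}}$ and merely extract the single map $l=\Phi^{\#}_{M_2}$, so there is no well-definedness of a morphism to check there (that concern, if anywhere, belongs inside the proof of Proposition \ref{prop6}'s ``only if'' direction, which you are treating as a black box).
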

\begin{proof}
The proposition can be proved directly. It can also be seen as a corollary of Proposition \ref{prop6}. This can be explained as follows.  By assumption, for each
$X \in \mathfrak{C}_{\mathscr{F}_2}= \mathscr{F}_2(M_2)$, $F_{\mathscr{F}_2}^{X}=X$ are linearly independent. Then according to Proposition \ref{prop6}, the control system $\Sigma_{\mathfrak{G}_2}$ is liftable
to $\Sigma_{\mathfrak{G}_1}$ if and only if there exists a trajectory-preserving morphism from
the $C^{\nu}$-tautological control system $\mathfrak{G}_{\Sigma_{\mathfrak{G}_1}}$ to $\mathfrak{G}_{\Sigma_{\mathfrak{G}_2}}$. On the other hand,
from Proposition 3 we know that $\mathfrak{G}_{\Sigma_{\mathfrak{G}_i}}=\mathfrak{G}_i, i=1,2$. This yields the result.
\end{proof}
\section{Control systems of second-order type}
\label{sec:6}
This section contains an example of second-order type control systems as an application of our results
in Section \ref{sec:2}.  Compared with  Theorem 11 in \cite{Ref7}, we get supplementary results due to the special structure properties of this class of control systems.
Let's first introduce the definition of control systems of second-order type.
\begin{definition}
A second-order type control system is a $C^{\infty}$ control-affine system
$\Upsilon^{'}(t)=f_0(\Upsilon(t))+\sum_{i=1}^{k}u^{i}f_{i}(\Upsilon(t)), u^i \in \mathbb{R}, i=1, \cdots, k$
on the tangent bundle $TQ$ of a  connected smooth manifold $Q$ such that $T_{y}\pi_{TQ}(f_0)=y$ and $T_{y}\pi_{TQ}(f_i)=0, i=1, \cdots, k$,
for all $y \in TQ$. Here $f_0, f_1, \cdots, f_m$ are smooth vector fields on $TQ$. A second-order type tautological control system is a $C^{\infty}$-tautological control system  $\mathfrak{G}$
such that $\mathfrak{G}=\mathfrak{G}_{\Sigma}$  where $\Sigma$ is a second-order type control system.
\end{definition}
In terms of coordinates $(x^{i}, y^{i})$ where $y^{i}$ are the canonical fiber coordinates corresponding to coordinates
$x^{i}$ on $Q$, a second-order type control system has the following local form
\begin{eqnarray}\label{equ4}
&&{{x^i}}{'}=y^{i},\nonumber\\
&&{{y^i}}{'}=\Gamma^{i}(x, y)+\sum_{j=1}^{k}u^{j}g_{j}^{i}(x,y).
\end{eqnarray}
The class of second-order type control systems is an attractive subject of research,
not only because they are abundant in real lift but also they offer very interesting mathematical problems. Questions like controllability, accessibility, stabilization provide the starting point for very interesting mathematical theories. A typical class of second-order type control system is  Lagrangian mechanical control system (see \cite{Ref14}), for which the control properties can be characterized by using the underlying geometric structures.

In this section, we will restrict the  main questions introduced in Section \ref{intro} to the class of
second-order type control systems. That is,  given a  second-order type tautological control system
$\mathfrak{H}$ on the tangent bundle $TQ$ of the configuration manifold $Q$, and a mapping $\phi: P \rightarrow Q$
where $P$ is a smooth manifold,
we want to look for a second-order type tautological control system $\mathfrak{G}$ on the tangent bundle $TP$ of the configuration manifold
$P$ such that there is a trajectory-preserving morphism from $\mathfrak{G}$ to $\mathfrak{H}$.  Besides, we will provide sufficient conditions such that the reachability of $\mathfrak{H}$ can imply the reachability of $\mathfrak{G}$.  Here we state the questions under
the framework of tautological control systems to deal with distinctions between local and global, and to lift the control systems such that it is well-defined both locally and globally.

Given a second-order type control system
\begin{equation}\label{equ8}
\Sigma_2: \Upsilon{'}(t)=g_0(\Upsilon(t))+\sum_{i=1}^{k}u^{i}g_{i}(\Upsilon(t)), u^i \in \mathbb{R}, i=1, \cdots, k
\end{equation}
on the tangent bundle $TQ$ of  the configuration manifold $Q$, in the following we will suppose that $g_1, \cdots, g_k$ are linearly independent.
\begin{proposition}\label{prop8}
Let $\Sigma_2$ be a second-order type control system on the tangent bundle $TQ$ of the configuration manifold $Q$.
Let $\mathfrak{H}_{\Sigma_2}=(TQ, \mathscr{F}_{\Sigma_2})$ be its associated second-order type tautological control
system. Let $\phi: P \rightarrow Q$ be a submersion, where $P$ is a smooth manifold. Then there exists a second-order
type tautological control system $\mathfrak{G}_{\Sigma_1}=(TP, \mathscr{F}_{\Sigma_1})$
such that there is a trajectory-preserving morphism from $\mathfrak{G}_{\Sigma_1}$ to $\mathfrak{H}_{\Sigma_2}$.
\end{proposition}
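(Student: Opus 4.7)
The plan is to apply Theorem \ref{thm1} (together with its control-affine refinement in the remark immediately following it) to the submersion $T\phi: TP \to TQ$, strengthened so that the local lifts are chosen to respect the second-order structure. Since $\phi$ is a submersion, so is $T\phi$, and by the standing hypothesis the control vector fields $g_1, \ldots, g_k$ are pointwise linearly independent. The task reduces to constructing $f_0, f_1, \ldots, f_k \in \Gamma^{\infty}(T(TP))$ such that $f_0$ is a semispray on $TP$, each $f_i$ with $i \geq 1$ is vertical with respect to $\pi_{TP}$, and each $f_j$ is $T\phi$-related to $g_j$. The control-affine system on $TP$ with drift $f_0$ and control vector fields $f_1, \ldots, f_k$ is then of second-order type, its associated tautological control system $\mathfrak{G}_{\Sigma_1}$ is a candidate, and the trajectory-preserving morphism $(T\phi, \Phi^{\#})$ is furnished by Proposition 2 (pointwise $T\phi$-relatedness).

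For the local construction I use submersion-adapted charts: around each point of $P$ there exist coordinates $(x^1, \ldots, x^n)$ on $P$ and $(q^1, \ldots, q^m)$ on $Q$ with $\phi(x^1, \ldots, x^n) = (x^1, \ldots, x^m)$, so the induced chart $(x^i, \dot x^i)$ on $TP$ makes $T\phi$ the projection onto $(x^a, \dot x^a)_{a \leq m}$. In these coordinates $g_0 = \dot q^{a}\, \partial_{q^{a}} + G^{a}(q,\dot q)\, \partial_{\dot q^{a}}$ and $g_i = h_i^{a}(q,\dot q)\, \partial_{\dot q^{a}}$. I set $f_{0,\alpha} := \dot x^{j}\, \partial_{x^{j}} + (G^{a} \circ T\phi)\, \partial_{\dot x^{a}}$ (with the remaining $\dot x^{a}$-components, $a > m$, set to zero) and $f_{i,\alpha} := (h_i^{a} \circ T\phi)\, \partial_{\dot x^{a}}$. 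Each $f_{0,\alpha}$ is a local semispray, each $f_{i,\alpha}$ is vertical, and both are $T\phi$-related to the corresponding generator of $\Sigma_2$ by direct inspection.

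To globalise I patch with a smooth partition of unity $\{\chi_\alpha\}$ on $TP$ subordinated to the above cover, and define $f_0 := \sum_\alpha \chi_\alpha f_{0,\alpha}$ and $f_i := \sum_\alpha \chi_\alpha f_{i,\alpha}$. The key observation is that the semispray identity $T_v \pi_{TP}(f_0(v)) = v$ is \emph{affine} in $f_0$ while verticality is linear in $f_i$: since $\sum_\alpha \chi_\alpha \equiv 1$, one obtains $T_v\pi_{TP}(f_0(v)) = \sum_\alpha \chi_\alpha(v)\, v = v$, and $T_v\pi_{TP}(f_i(v)) = 0$. The $T\phi$-related property is likewise preserved, being linear in the lifted vector field. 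Finally, $\Phi^{\#}$ is defined on each open $\mathcal{V} \subseteq TQ$ by sending $(g_0 + \sum u^{i} g_i)|_{\mathcal{V}}$ to $(f_0 + \sum u^{i} f_i)|_{T\phi^{-1}(\mathcal{V})}$; well-definedness follows from the linear independence of $g_1, \ldots, g_k$, and its extension to a continuous linear map satisfying the coherence condition of Definition \ref{Def3} is obtained exactly as in the proof of Theorem \ref{thm1}.

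The step I expect to be most delicate is precisely the one that distinguishes Proposition 8 from a direct appeal to Theorem \ref{thm1}: verifying that partition-of-unity patching genuinely preserves the semispray character of the drift. This hinges on the affine (rather than merely linear) nature of the condition $T\pi_{TP}(f_0(v)) = v$ combined with the defining identity $\sum_\alpha \chi_\alpha = 1$. Everything else — the existence of submersion-adapted coordinates, the purely coordinate-wise construction of the local lifts, and the reduction of trajectory-preservation to the pointwise identity $T_x(T\phi)(f_j(x)) = g_j(T\phi(x))$ via Proposition 2 — is standard.
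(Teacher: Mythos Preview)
Your proposal is correct and follows essentially the same approach as the paper's own proof: local submersion-adapted lifts of $g_0,g_1,\ldots,g_k$ satisfying the second-order (semispray/vertical) constraints, glued by a partition of unity, with $\Phi^{\#}$ defined by $g_0+\sum u^i g_i \mapsto f_0+\sum u^i f_i$ and well-definedness coming from the linear independence of $g_1,\ldots,g_k$. Your version is in fact more careful than the paper's on the one genuinely nontrivial point---the paper invokes only ``linearity of the derivative'' when patching, whereas you correctly isolate that the semispray condition $T_v\pi_{TP}(f_0(v))=v$ is affine and is preserved precisely because $\sum_\alpha \chi_\alpha \equiv 1$.
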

\begin{proof}
Since $\Phi$ is a submersion, for each point $x_{\alpha}\in P$, there exists a neighbourhood $\mathcal{U}_{\alpha}$ of
$x_{\alpha}$, and a smooth vector field $f_0^{\alpha}$ on $T\mathcal{U}_{\alpha}$ such that
$T_{y}T\phi(f_0^{\alpha})=g_0, T_{y}\pi_{TP}(f_0^{\alpha})=y$, for each $y \in T\mathcal{U}_{\alpha}$.
By using  partition of unity subordinated to the open cover $\{\mathcal{U}_{\alpha}\}$ and
by the linearity of the derivative of a differentiable mapping, we can get a smooth vector field $f_0$ on $TP$ such that $T_{y}T\phi(f_0)=g_0, T_{y}\pi_{TP}(f_0)=y$, for each $y \in TP$.  Similarly, we can get smooth vector fields
$f_1, \cdots, f_k$ on $TP$  such that $T_{y}T\phi(f_i)=g_i, T_{y}\pi_{TP}(f_i)=0, i=1, \cdots, k$, for each $y \in TP$.
Let $T\phi^{\#}(g_0+\sum_{i=1}^{k}u^{i}g_{i})=f_0+\sum_{i=1}^{k}u^{i}f_{i}$, for each $u_i \in \mathbb{R}, i=1, \cdots, k$.
Consider the second-order type control system
$\Sigma_1: \eta{'}(t)=f_0(\eta(t))+\sum_{i=1}^{k}v^{i}f_{i}(\eta(t)), v^i \in \mathbb{R}, i=1, \cdots, k$
on the tangent bundle $TP$ of the configuration manifold $P$. Since
$g_1, \cdots, g_k$ are linearly independent, we know that  $(T\phi, T\phi^{\#})$ defines a trajectory-preserving
morphism from $\mathfrak{G}_{\Sigma_1}=(TP, \mathscr{F}_{\Sigma_1})$ to $\mathfrak{H}_{\Sigma_2}$.
\end{proof}
\begin{theorem}\label{thm6}
Let $\Sigma_2$ be a second-order type control system on the tangent bundle $TQ$ of  the configuration manifold $Q$.
Let $\mathfrak{H}_{\Sigma_2}=(TQ, \mathscr{F}_{\Sigma_2})$ be its associated second-order type tautological control
system. Let $\phi: P \rightarrow Q$ be a submersion, where $P$ is a smooth manifold. Suppose that $z_0 \in \phi(P)$
is such that $g_0(z_0, 0)=0$ and $\phi^{-1}(z_0)$ is connected, $T(\phi(P)) \subseteq \mathcal{R}_{\mathfrak{H}_{\Sigma_2}}((z_0,0), \mathcal{G}_{\mathfrak{H}_{\Sigma_2}, \mathrm{pwc}})$.
Then there exists a  second-order type tautological control system $\mathfrak{G}_{\Sigma_1}=(TP, \mathscr{F}_{\Sigma_1})$ such that there exists a trajectory-preserving morphism from $\mathfrak{G}_{\Sigma_1}$ to $\mathfrak{H}_{\Sigma_2}$,
and if the morphism is global in time, then $\mathfrak{G}_{\Sigma_1}$ is reachable from every $(x_0, v_0) \in T(\phi^{-1}(z_0))$.
\end{theorem}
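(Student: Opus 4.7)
The plan is to secure existence of the morphism from Proposition~\ref{prop8} and deduce the reachability claim via Proposition~\ref{prop5}, after enriching the basic lift with additional second-order type controls adapted to $\ker T\phi$. By Proposition~\ref{prop8}, there is a second-order type control system on $TP$ with lifted drift $f_0$ and control vector fields $f_1, \ldots, f_k$, yielding a trajectory-preserving morphism $(T\phi, T\phi^\#)$ from the associated $\mathfrak{G}_{\Sigma_1}$ to $\mathfrak{H}_{\Sigma_2}$. To apply Proposition~\ref{prop5} with $\Phi = T\phi$, reference point $(z_0, 0) \in TQ$ and fiber $(T\phi)^{-1}((z_0,0)) = T(\phi^{-1}(z_0))$, I note that $\phi$ being a submersion makes $\phi(P)$ open in $Q$, so $T\phi(TP) = T(\phi(P))$, and the reachability hypothesis on $\mathfrak{H}_{\Sigma_2}$ supplies the required image inclusion. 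With global-in-time assumed, the only missing ingredient is that $T(\phi^{-1}(z_0))$ be a reachability set for $\mathfrak{G}_{\Sigma_1}$.

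To arrange this while respecting both the second-order type structure and the morphism, I would apply Theorem~\ref{thm3} to the connected manifold $P$ with the smooth distribution $\ker T\phi$ to obtain globally defined smooth vector fields $X_1, \ldots, X_l$ on $P$ whose pointwise spans equal $\ker T\phi$. Their vertical lifts, defined in adapted coordinates (where $\phi(x^1,\ldots,x^n) = (x^1,\ldots,x^m)$) by $\tilde f_j(x,y) = X_j^i(x) \frac{\partial}{\partial y^i}$, are second-order type since vertical, and lie in $\ker T(T\phi)$ since $T\phi \circ X_j = 0$. Adjoining $\tilde f_1, \ldots, \tilde f_l$ as additional control vector fields yields an augmented second-order type control system whose associated tautological control system still admits $(T\phi, T\phi^\#)$ as a trajectory-preserving morphism to $\mathfrak{H}_{\Sigma_2}$: each $Y = g_0 + \sum_j u^j g_j$ is lifted to $f_0 + \sum_j u^j f_j$, with zero assigned to the added controls.

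For the fiber to be a reachability set, given $(x_0, v_0), (x_1, v_1) \in T(\phi^{-1}(z_0))$ I would pick a smooth curve $\xi: [0, T] \to \phi^{-1}(z_0)$ with $\xi(0)=x_0$, $\xi(T)=x_1$, $\dot\xi(0)=v_0$, $\dot\xi(T)=v_1$, which is available because $\phi^{-1}(z_0)$ is a connected embedded submanifold. The curve $(\xi, \dot\xi)$ takes values in $T(\phi^{-1}(z_0))$, and I would realize it as a trajectory of the augmented system by setting the original controls to zero and solving pointwise for added controls from $\ddot\xi^i(t) = \Gamma_1^i(\xi, \dot\xi) + \sum_j v^j(t) X_j^i(\xi(t))$ in the fiber directions $i > m$, which is feasible because the $X_j$ span $\ker T\phi$. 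The main obstacle is checking that this trajectory really remains inside $T(\phi^{-1}(z_0))$: the morphism condition forces $\Gamma_1^{i'}(x, y) = \Gamma_2^{i'}(\phi(x), T\phi(y))$ for $i' \leq m$, which on $T(\phi^{-1}(z_0))$ collapses to the constant $\Gamma_2^{i'}(z_0, 0)$, and this vanishes by the standing hypothesis $g_0(z_0,0) = 0$; hence the base velocity components $y^{i'}(t)$ stay pinned to zero and the trajectory stays in the fiber. Proposition~\ref{prop5} then delivers $\mathcal{R}_{\mathfrak{G}_{\Sigma_1}}((x_0, v_0)) = TP$ for every $(x_0, v_0) \in T(\phi^{-1}(z_0))$.
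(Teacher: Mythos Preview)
Your proposal is correct and follows essentially the same route as the paper: augment the lifted system from Proposition~\ref{prop8} by vertical lifts $X_j^{\mathrm{vlft}}$ of global generators $X_1,\dots,X_l$ of $\ker T\phi$ (obtained via Theorem~\ref{thm3}), then show that the tangent lift $(\xi,\dot\xi)$ of a curve in $\phi^{-1}(z_0)$ with prescribed endpoint data is a trajectory of the augmented system, and conclude by Proposition~\ref{prop5}. The paper carries out the key verification invariantly, computing $T\pi_{TP}(\Xi'-f_0(\Xi))=0$ and $TT\phi(\Xi'-f_0(\Xi))=-g_0(z_0,0)=0$ to place $\Xi'-f_0(\Xi)$ in $\mathrm{span}\{X_j^{\mathrm{vlft}}\}$, whereas you do the equivalent coordinate check $\Gamma_1^{i'}(\xi,\dot\xi)=\Gamma_2^{i'}(z_0,0)=0$ for $i'\le m$; note that your phrase ``remains inside $T(\phi^{-1}(z_0))$'' is slightly misdirected, since the curve lies there by construction---what you are really verifying is that the base-direction components of the trajectory equation are satisfied so that $(\xi,\dot\xi)$ is indeed a system trajectory.
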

\begin{proof}
Since $\phi$ is a submersion, then according to (\ref{equ5}), there exist smooth vector fields $X_1, \cdots, X_l$
on $P$ such that \[ \ker T\phi(x)=\text{span}_{\mathbb{R}}\{X_{1}(x), \cdots, X_{l}(x)\},\]
for all $x \in P$. Consider now the second-order type control system $\Sigma_1$
\[ \eta{'}(t)=f_0(\eta(t))+\sum_{i=1}^{k}v^{i}f_{i}(\eta(t))+\sum_{j=1}^{l}\omega^{j}X_j^{\text{vlft}}(\eta(t)),\]
$v^i, \omega^j \in \mathbb{R}$ for $i=1, \cdots, k, j=q, \cdots, l$,
where $f_0, f_1, \cdots, f_k$ are the vector fields on $TP$ given as in Proposition \ref{prop8} and $X_j^{\text{vlft}}$ denotes the vertical lift of  the vector field $X_j$ on $P$ (see \cite{Ref14}).

We claim that $T\phi^{-1}(z_0, 0)$ is a reachability set for $\mathfrak{G}_{\Sigma_1}$. To prove it,
for $\phi^{-1}(z_0)$ is an embedded submanifold of $P$, denote the embedding map by $i: \phi^{-1}(z_0) \rightarrow P$.
We have $Ti(T(\phi^{-1}(z_0)))=(T\phi)^{-1}(z_0, 0)$.  Choose arbitrary points $(x_1, v_1), (x_2, v_2) \in (T\phi)^{-1}(z_0, 0)$. Since $\phi^{-1}(y_0)$ is connected,  there exists a piecewise smooth path $\xi: [0,1] \rightarrow \phi^{-1}(y_0)$ such that $i(\xi(0))=x_1, Ti(\xi{'}(0))=v_1, i(\xi(1))=x_2, Ti(\xi{'}(1))=v_2$.
Let $\Xi(t)= (i(\xi(t)), Ti(\xi{'}(t))) \in T\phi^{-1}(z_0, 0)$. Since $T_y\pi_{TP}(f_0)=y$, we have
\begin{equation}\label{equ6}
T\pi_{TP}(\Xi{'}(t)-f_0(\Xi(t))= Ti(\xi{'}(t))-Ti(\xi{'}(t))=0.
\end{equation}
Besides,
\begin{equation}\label{equ7}
TT\phi(\Xi{'}(t)-f_0(\Xi(t))=0-g_0(z_0, 0)=0-0=0.
\end{equation}
According to (\ref{equ6}) and (\ref{equ7}), we have
\[\Xi{'}(t)-f_0(\Xi(t)) \in \text{span}_{\mathbb{R}}\{X_1^{\text{vlft}}(\Xi(t)), \cdots, X_l^{\text{vlft}}(\Xi(t)) \},\]
for $t \in [0,1]$. It follows that there exist piecewise smooth functions $\omega^j: [0,1] \rightarrow \mathbb{R}, j=1, \cdots, l$, such that
\[\Xi{'}(t)=f_0(\Xi(t))+\sum_{j=1}^{l}\omega^{j}(t)X_j^{\text{vlft}}(\Xi(t)).\]
Since the piecewise smooth functions $\omega^j$ satisfy  $\omega^j \in \mathrm{L}^{\infty}_\mathrm{loc}([0,1]; \mathbb{R})$, then according to Proposition \ref{prop2},
$\Xi(t)$ is a trajectory for $\mathfrak{G}_{\Sigma_1}$. This means $T\phi^{-1}(z_0, 0)$ is a reachability set for $\mathfrak{G}_{\Sigma_1}$.
Then we know that $\mathfrak{G}_{\Sigma_1}$ is reachable from every $(x_0, v_0) \in T(\phi^{-1}(z_0))$ by Proposition 5.
\end{proof}
\remark
Consider the second-order type control system (\ref{equ8}).
According to Theorem 11 in \cite{Ref7}, locally there exists an affine control system $\Sigma$ on $TP$ with $2n-2m+k$ inputs
such that it satisfies the requirement of Theorem 11. Here $n-m$ is the rank of the distribution $\ker T\phi$. This result is improved by our Theorem \ref{thm6}  in the following aspects.
First,  $\Sigma$  can be required to admit the structure of second-order type. Second, locally the number of inputs can be reduced to $n-m+k$. Third, a globally defined control system is derived.
\example
Given an  affine connection control system $\Sigma_2$ on the configuration manifold $Q$  (see \cite{Ref14}),
 \[\nabla_{\gamma{'}(t)}\gamma{'}(t)=\sum_{r=1}^{q}u^rg_{r}(\gamma(t)),\]
it can be written as an affine control system on  $TQ$
\[\Upsilon{'}(t)=S(\Upsilon(t))+\sum_{r=1}^{q}u^rg_{r}^{\text{vlft}}(\gamma(t)).\]
Using the same reasoning as above, we obtain an affine connection control system $\Sigma_1$
\[\nabla_{\eta{'}(t)}\eta{'}(t)=\sum_{j=1}^{n-m+q}v^jf_{j}(\eta(t)),\]
on the configuration manifold $P$ such that $\Sigma_2$ can be liftable to $\Sigma_1$ via a surjective submersion
$\phi: P \rightarrow Q$. Furthermore, suppose that $\phi^{-1}(z_0)$  is connected for $z_0 \in Q$
and  $\Sigma_2$ can be liftable to $\Sigma_1$ globally in time,
then if $\Sigma_2$  is controllable from  the zero velocity point of $z_0$, we can assert that $\Sigma_1$  is controllable from every  $(x, v) \in T(\phi^{-1}(z_0))$.



\end{document}